\documentclass[12pt,titlepage]{amsart} 
\usepackage{amsfonts, amssymb, mathdots, verbatim}
\usepackage{arydshln} 
\usepackage{enumitem}
\usepackage{tikz}
\usepackage{hyperref}
\usepackage{needspace}
\usetikzlibrary{positioning}
\newtheorem{theorem}{Theorem}[section]

\newtheorem{lemma}[theorem]{Lemma}
\newtheorem{corollary}[theorem]{Corollary}
\theoremstyle{remark}
\newtheorem{remark}[theorem]{Remark}

\theoremstyle{definition}

\hypersetup{
  pdfauthor={Anthony Hernandez},
  pdftitle={The Redheffer Matrix Parity Problem }
}

\begin{document}
\title{The Redheffer Matrix Parity Problem}
\author{Anthony Hernandez}
\email{anthony@anthony-hernandez.com}

\subjclass[2020]{Primary 05A15; Secondary 11A25, 11B83}

\begin{abstract} 
A solution determining the parity of unit entries in Redheffer matrices is given. The solution 
solves the problem by enumerating two disjoint sets from which set 
membership determines parity. The solution framework uses generating functions. 
\end{abstract}
\maketitle
\newpage

\section{Introduction}\label{sec:problem}
The divisor matrix $D=(d_{r,s})_{r,s\in\mathbf{N}}$ is defined by $d_{r,s}=1$ if $r$ divides $s$, and is $0$ otherwise.
\footnote{The letters $r$ and $s$ are dummy indices ranging over $\{1,\ldots,n\}$; alphabetical order relative to $n$ is immaterial.}
A finite truncation of the divisor matrix obtained from the upper left $n\times n$ submatrix of $D$, setting each entry in 
the first column to $1$ was first studied by Raymond Redheffer, these are now called Redheffer matrices. We say an 
entry of a Redheffer matrix is a \emph{unit entry} if it equals $1.$ This paper is concerned with solving the following problem:
\begin{quote}
\small
\emph{The Redheffer Matrix Parity Problem -- Determine the numbers $n$ for which the parity of the unit entries in an $n\times n$ Redheffer  matrix is even or odd. }
\end{quote}
To begin solving the problem we should know the number of unit entries, denoted by $U(n)$ in Redheffer matrices. 
$U(n)$ is counted as follows: The first column consists entirely of 1's, contributing $n$ counts. For each subsequent 
column $s \geq 2$, the count of 1's is exactly $d(s)$, the number of divisors of $s$, since an entry $d_{r,s}$ is $1$ precisely 
when $r$ divides $s$. The top-left corner $d_{1,1}$ is counted twice: once in the first column and for a second time on the 
divisor sum --- $d(1)=1.$  Adjusting for this overcounting, we obtain:
\begin{equation}\label{eq:sumdiv}
U(n) = n - 1 + \sum_{s=1}^{n} d(s).
\end{equation}

To determine the parity of $U(n)$, we consider the identity modulo $2$. The divisor function $d(s)$ is odd if and only if $s$ 
is a perfect square, as divisors of non-square integers pair into distinct pairs $(r, s/r)$, contributing an even number to the count. 
Thus, modulo $2$, the sum $\sum_{s=1}^{n} d(s)$ is congruent to the number of perfect squares not exceeding $n$. Let 
$m = \lfloor \sqrt{n} \rfloor$; the perfect squares in $\{1, 2, \dots, n\}$ are $1^2, 2^2, \dots, m^2$. Hence,
$
\sum_{s=1}^{n} d(s) \equiv m \pmod{2}.
$
Substituting back, we obtain: 
\begin{equation*}\label{eq:u(n)mod2}
U(n) \equiv n - 1 + m \pmod{2}.
\end{equation*}
Write $n$ as the decomposition
\begin{equation} \label{eq:offset}
n = m^2 + j
\end{equation}
with $0 \leq j \leq 2m,$ and note that $j$ has the following interpretation: $j$ measures how far above the integer $n$ is 
from $m^2$. In light of that interpretation it is natural to refer to $j$ as the \emph{offset}. Now observe that 
$n \equiv j \pmod{2}$ if $m$ is even, and $n \equiv j+1 \pmod{2}$ if $m$ is odd. A brief case analysis shows that in all cases,
$U(n) \equiv j + 1 \pmod{2}.$ Thus, the parity of $U(n)$ is determined solely by the offset $j$. 
\begin{quote}
\small
\emph{Offset Rule: $U(n)$ is odd when $j$ is even, and even when $j$ is odd.}
\end{quote}
This offset rule immediately suggests a global characterization. Define the sets
\begin{equation}\label{eq:def-JK}
\mathcal{J} = \{ n \in \mathbf{N} : j \text{ is even} \}, \quad \mathcal{K} = \{ n \in \mathbf{N} : j \text{ is odd} \},
\end{equation}
Then $U(n)$ is odd if and only if $n \in \mathcal{J}$, and even if and only if $n \in \mathcal{K}$. The Redheffer Matrix 
Parity Problem thus reduces to enumerating the sets $\mathcal{J}$ and $\mathcal{K}$ explicitly.

\begin{remark}[Scope, and Provenance]\label{rem:scope}
There are known solutions to the Redheffer Matrix Parity Problem. They are short, and elementary treatments. They observe that parity  hinges on the offset from the preceding square and conclude that the answer depends only on whether that offset is even or odd. In those accounts the two offset classes---$\mathcal{J}$ and $\mathcal{K}$---are introduced \emph{by fiat}: they are named at the outset and  used to state the rule, but they are neither derived from a canonical construction nor analyzed as objects in their own right. This paper  makes no claim to novelty for that parity conclusion. What \emph{is} new here is a singular conceptual framework that \emph{constructs} those classes -- $\mathcal{J}$ and $\mathcal {K}$ -- and explains their structure.  In short, the paper replaces ``assume $\mathcal{J}$ and $\mathcal{K}$'' with a principled construction of $\mathcal{J}$ and $\mathcal{K}$, and develops consequences that the brief proofs do not supply. What follows details our solution.
\end{remark}

\section{The Solution Plan}\label{sec:frame}
We begin by constructing a partition of the natural numbers $\mathbf{N}.$ A \emph{partition} of a set is defined to be a collection of nonempty, pairwise disjoint subsets, called blocks, whose union recovers the entire set. One may further \emph{refine} a partition by subdividing one or more of its blocks into smaller nonempty parts so that the resulting collection also forms a partition. It is important for our purposes that the partition of $\mathbf{N}$ can be realized via a bijective map, $f,$ that assigns to each natural number $n$ an ordered pair $(m,j)$ -- with $m$ and $j$ as defined in Equation\eqref{eq:offset}. It will be shown that under $f,$ the first coordinate $m$ determines the block containing $n,$ and the second coordinate $j$ measures how far above $m^{2}$ the number $n$ sits in the block partition. To keep track of the three statistics --- the integer $n$, its block label $m$, and its offset $j$--- we work in the formal power–series ring $\mathbf{C}[[x,z,w]]$ (coefficients in $\mathbf{C}$), where $x,z,w$ are algebraically independent indeterminates, and package them into a generating function,
\begin{equation}\label{eq:F-def}
F(x;z,w)\;=\;\sum_{n=0}^{\infty} x^{\,n}\,z^{\,m}\,w^{\,j}\ \in\ \mathbf{C}[[x,z,w]].
\end{equation}

Our next step is to develop the analytic properties of $F.$ Lemma~\ref{lem:F_not_rational} shows that $F(x;z,w)$ is not a rational function of $x$ over $\mathbf{Q}(z,w)$. We emphasize that global nonrationality does not preclude rational behavior after specializing $(z,w).$ Lemma~\ref{lem:degeneratelocus} identifies an explicit degeneracy locus in the $(z,w)$–plane where $F$ collapses to a rational function. Outside this \emph{degeneracy locus}, the specialization $F(x;z,w)$ is nonrational as a function of $x$; in particular, its coefficient sequence does not satisfy any linear recurrence with constant coefficients. The distinction between Lemma~\ref{lem:degeneratelocus} and Lemma~\ref{lem:F_not_rational} is subtle: the former classifies rational specializations for fixed parameters $(z,w)\in\mathbf{C}^2$, while the latter asserts that the full family $F(x;z,w)$ is nonrational over the parameter field $\mathbf{Q}(z,w)$. In what follows, we work exclusively with nondegenerate specializations -- away from this locus. None of our subsequent arguments relies on rationality; the coefficient extractions we require are routine within each block partition. 

To bring $F$ into a one variable framework we introduce in Section ~\ref{sec:linprojection}, a linear projection $\varphi$ that “reads off’’ the parity of the offset $j$. Working in the standard monomial $\mathbf{C}$-basis $\{1,w,w^{2},\dots\}$ of $\mathbf{C}[w]$, prescribing the values of a $\mathbf{C}$-linear functional $L_w$ on these monomials determines it uniquely. We require it to annihilate squares and encode parity, namely $L_w(1)=0$, $L_w(w^{2r+1})=+1$, and $L_w(w^{2r})=-1$ for $r\ge1$. This pins down a single functional, conveniently realized by evaluation at $0$ and $-1$: $L_w(P)=P(0)-P(-1)$ for $P\in\mathbf{C}[w]$. We then let $\varphi$ act on monomials by $\varphi(x^{n}z^{m}w^{j})=x^{n}\,L_w(w^{j})$ and extend linearly across the series. After specializing $z=1$, the projection produces
\begin{equation}
\varphi\bigl(F(x;1,w)\bigr)=F(x;1,0)-F(x;1,-1)\in\mathbf{C}[[x]],
\label{eq:ogen-specialize}
\end{equation}
a generating function in the single indeterminate $x$, with coefficients denoted $a(n)$. It makes sense to write 
\begin{equation}\label{eq:G-def}
F(x;1,0)-F(x;1,-1)=G(x)=\sum_{n=0}^\infty a(n)x^n.
\end{equation}
This assigns to each $n$ a coefficient $a(n)$ determined solely by the offset $j$ in the decomposition $n=m^2+j$: namely $a(n)=0$ if $j=0$, and $a(n)=(-1)^{\,j-1}$ for $j\ge1$. This one variable form is what enables the classical manipulations used later--- partial sums, and closed forms. It is important to note that we did not introduce $a(n)$ or $G(x)$ by fiat. The linear functional $L_w(P)=P(0)-P(-1)$ is the canonical parity extractor in the $w$--coordinate: it annihilates the square ($j=0$) and maps even/odd offsets to $-1$/$+1$, respectively. Thus $\varphi$ is not an ad hoc maneuver; it is the natural projection to descend from the fully informative trivariate series $F(x;z,w)$ to a one-variable series $G(x)$ whose coefficients are tailored to the parity of the offset. One could, in principle, work entirely within the ring $\mathbf{C}[[x,z,w]]$, but the subsequent manipulations are more standard and efficient in a single variable--hence the projection.

It is a standard result that the generating function for the partial sums of $a(n)$ is given by $H(x)=G(x)/(1-x).$ In Theorem ~\ref{thm:parsum} we determine a closed form for those partial sums in terms of $n.$
\begin{equation}
H(n)=\sum_{k=0}^{n}a(k),
\label{eq:H-def}
\end{equation}
Theorem ~\ref{thm:parsum} also shows that $H(n)$ can only obtain the values $0$ or $1.$ 

The preceding sections lead us to Theorem~\ref{thm:main2} which demonstrates that the values of $H(n)$ determine the parity of $U(n)$
\begin{equation}\label{eq:H-par}
U(n) \equiv 1 - H(n) \pmod{2}.
\end{equation}

Section ~\ref{sec:zerosandones} shows that $a(n)$ induces a refinement on the partition $I_m,$ into sets, conveniently denoted: $\mathcal{J}$ and $\mathcal{K}$. It is shown that $H(n)$ is $1$ precisely when $n\in\mathcal{K}$ (odd offset) and $0$ when $n\in\mathcal{J}$ (even offset). Theorem \ref{thm:main1} then provides explicit closed-form formulas for the $n$-th elements of the sets $\mathcal{J}$ and $\mathcal{K}.$ Since $H(n)$ determines the parity of $U(n)$, this theorem completely characterizes the sets of all natural numbers for which Redheffer matrices have an odd or even number of unit entries, and thus completes the solution to the Redheffer Matrix Parity Problem.

\section{Notations and General Conventions}\label{sec:notation}
Let $\mathbf{N}=\{0,1,2,\ldots\}$, $\mathbf{Q}$, and $\mathbf{C}$ denote the natural numbers (including 0), the rational numbers, and the complex numbers, respectively. 
Let $x$, $z$, $w$ be algebraically independent indeterminates. Unless stated otherwise, all generating functions are taken as formal power series in $x$ with coefficients in $\mathbf{C}[z,w]$; that is, our ambient ring is $\mathbf{C}[z,w][[x]] \subset \mathbf{C}[[x,z,w]]$.
We also use the polynomial rings $\mathbf{C}[w]$, $\mathbb{C}[z,w]$, and $\mathbf{C}[z,w][x]$.
Via the inclusion $\mathbf{Q}\hookrightarrow \mathbf{C}$ we regard
$$
\mathbf{Q}[z,w]\subset \mathbf{C}[z,w]\subset \mathbf{C}[z,w][x]\subset \mathbf{C}[z,w][[x]]
$$
as subrings. 
Specialization acts on coefficients: for $a,b\in\mathbf{C}$ and $F(x;z,w)\in \mathbf{C}[z,w][[x]]$, we write $F(x;a,b)$ for the image under the $\mathbf{C}$-algebra map $\mathbf{C}[z,w]\to\mathbf{C}$ sending $z\mapsto a$, $w\mapsto b$, giving an element of $\mathbf{C}[[x]]$.  Coefficient extraction is denoted by brackets: $[x^n]H(x)$ is the coefficient of $x^n$.

\section{The Bijective Map \texorpdfstring{$f$}{f} and the Partition \texorpdfstring{$I_m$}{I\_m}}\label{sec:mappartition}
Define the map
$$
f \colon \mathbf{N} \;\longrightarrow\;
\{(m,j)\,\mid\,m\in\mathbf{N},\,0 \le j \le 2m\}
$$
by
$$
f(n) = \bigl(\lfloor \sqrt{n}\rfloor,\;n-\lfloor \sqrt{n}\rfloor^2\bigr).
$$
It is straightforward to check that $f$ is a bijection: \textit{Surjectivity}: For any pair $(m,j)$ with $0 \le j \le 2m$, the integer $n = m^2 + j$ maps to $(m,j)$.
\textit{Injectivity}: If $f(n_1) = f(n_2)$, then $\lfloor \sqrt{n_1}\rfloor = \lfloor \sqrt{n_2}\rfloor$ and $n_1 - m^2 = n_2 - m^2,$ implying $n_1 = n_2$. Hence each $n \in \mathbf{N}$ has a unique ordered pair $(m,j)$ with $0 \le j \le 2m$. Equivalently,
\begin{equation}
n = m^2 + j,
\end{equation}
where $m =\lfloor \sqrt{n}\rfloor,$ with $0 \le j < 2m+1.$
The following example illustrates $f:$ if $n=14$, then $\lfloor \sqrt{14}\rfloor = 3, 14 - 9 = 5,$ and $f(14)=(3,5).$ The first coordinate of $f(n)$ is used to define a partition of $\mathbf{N}$. Specifically, for each $m\in\mathbf{N}$, set
$$
I_{m} = \{n \in \mathbf{N} \mid f(n)=(m,j)\text{ for some } 0\le j \le 2m\}.
$$
Because $f$ is a bijection, these sets $\{I_{m}\}_{m=0}^{\infty}$ are pairwise disjoint and their union is all of $\mathbf{N}$. Thus the family $\{I_m\}_{m=0}^{\infty}$ forms a \emph{partition} of $\mathbf{N}$. We demonstrate the partition with the following example. If $m=1,$ then the block is $I_1 = \{ n \in \mathbb{N} : n = 1^2 + j,\ 0 \leq j \leq 2 \cdot 1 \}$. Thus $j \in \{0,1,2\}$ and the corresponding $n$ are $1^2+0=1$, $1^2+1=2$, and $1^2+2=3$; hence $I_1 = \{1,2,3\}$. For small values of $m$ we have:
\begin{center}
\begin{tabular}{c c c}
\(\displaystyle
\begin{aligned}
I_0 &= \{0\},\\[1mm]
I_1 &= \{1,\,2,\,3\},\\[1mm]
I_2 &= \{4,\,5,\,6,\,7,\,8\},\\[1mm]
I_3 &= \{9,\,10,\,11,\,12,\,13,\,14,\,15\}\\[1mm]
\vdots
\end{aligned}
\)
&
&
\end{tabular}
\end{center}

\section{The Generating Function \texorpdfstring{$F(x;z,w)$}{F(x;z,w)}}\label{sec:ogen-F}
We wish to write both $n$ and the pair $(m,j)$ in one generating function. We begin by introducing algebraically independent indeterminates $x,z$ and $w$ in the formal power–series ring $\mathbf{C}[[x,z,w]],$ letting the exponent of $x$ mark $n,$ the exponent of $z$ track $m,$ and the exponent of $w$ record the offset of $j$ within that block, and set
$$
F(x; z, w)
=
\sum_{n=0}^{\infty}
x^{\,n}\,
z^{\,m}\,
w^{\,j}.
$$
In this sum, each $n$ corresponds uniquely to its pair $(m,j)$ as above.

\begin{lemma}\label{lem:trivariate_F_identity}
In $\mathbf{C}[[x,z,w]]$ one has:
\begin{equation}\label{eq:F-closed-factor}
F(x;z,w) = \frac{1}{1-xw} \sum_{m=0}^{\infty} x^{m^{2}} z^{m} \bigl( 1 - (xw)^{2m+1} \bigr).
\end{equation}
\end{lemma}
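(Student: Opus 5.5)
The plan is to reorganize the defining sum of $F$ according to the partition $\{I_m\}_{m\ge 0}$ and then sum a finite geometric series inside each block. Because $f$ is a bijection onto $\{(m,j): m\in\mathbf{N},\,0\le j\le 2m\}$, and every $n\in I_m$ satisfies $n=m^2+j$, I will first rewrite
\begin{equation*}
F(x;z,w)=\sum_{m=0}^{\infty}\sum_{j=0}^{2m} x^{m^2+j}z^m w^j=\sum_{m=0}^{\infty}x^{m^2}z^m\sum_{j=0}^{2m}(xw)^j .
\end{equation*}
The reordering needs a justification in $\mathbf{C}[[x,z,w]]$. Each monomial $x^n z^m w^j$ occurs exactly once, by injectivity of $f$. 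Moreover, only finitely many terms contribute to any fixed coefficient; indeed, the $x$-degree alone pins down the term. So the two sums are equal as formal power series, since they have identical coefficient families.

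Next I will evaluate the inner finite geometric sum. The element $1-xw$ is invertible in $\mathbf{C}[[x,z,w]]$ because its constant term is $1$, with inverse $\sum_{k\ge0}(xw)^k$. The polynomial identity $(1-xw)\sum_{j=0}^{2m}(xw)^j=1-(xw)^{2m+1}$ then gives
\begin{equation*}
\sum_{j=0}^{2m}(xw)^j=\frac{1-(xw)^{2m+1}}{1-xw}.
\end{equation*}
Substituting this into the rewritten sum and pulling the common factor $(1-xw)^{-1}$ outside the sum over $m$ yields \eqref{eq:F-closed-factor}.

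The only delicate point is this last step: factoring $(1-xw)^{-1}$ out of an infinite sum. To justify it, I will note that multiplication by a fixed power series is continuous in the $x$-adic topology. Equivalently, I can check directly that $\sum_m x^{m^2}z^m(1-(xw)^{2m+1})$ is a well-defined series, since its $m$-th term has $x$-order at least $m^2$, so the partial sums converge. Multiplying by $(1-xw)^{-1}$ commutes with this limit, because each coefficient of the product depends on only finitely many terms. I expect no real obstacle beyond stating this summability check cleanly.
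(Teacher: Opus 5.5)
Your proposal is correct and follows the paper's proof: regroup $F$ by blocks as $\sum_{m} x^{m^2}z^m\sum_{j=0}^{2m}(xw)^j$, sum the finite geometric series, and pull out $(1-xw)^{-1}$. Your added justifications make rigorous the formal steps the paper passes over quickly; these are the uniqueness of each monomial, the invertibility of $1-xw$ in $\mathbf{C}[[x,z,w]]$, and the summability of the $m$-sum because its $m$-th term has $x$-order at least $m^2$.
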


\begin{proof}
Every $n\ge0$ can be written uniquely as $n=m^{2}+j$ with $m\ge0$ and $0\le j\le2m$, hence
\begin{equation}\label{eq:F-explicit}
F(x;z,w)=\sum_{m=0}^{\infty}\sum_{j=0}^{2m} x^{m^{2}+j} z^{m} w^{j}.
\end{equation}
For fixed $m$, note that $x^{m^2+j}z^{m}w^{j}=x^{m^2}x^{j}z^{m}w^{j}=x^{m^2}z^{m}(x^{j}w^{j}),$ and so Equation ~\ref{eq:F-explicit} can be rewritten as
\begin{equation}\label{eq:F-geometric-setup}
\sum_{m=0}^{\infty}x^{m^2}z^{m}\left(\sum_{j=0}^{2m}(xw)^{j}\right)
\end{equation} 
The parenthetical term is the geometric sum and is equal to 
\begin{equation*}
\sum_{j=0}^{2m}(xw)^{j}=\frac{1-(xw)^{2m+1}}{1-xw}\qquad\text{in }\mathbf{C}[x,w],
\end{equation*}
Substituting back into ~\ref{eq:F-geometric-setup} yields
\begin{equation}\label{eq:F-geometric}
\sum_{m=0}^{\infty}x^{m^2}z^{m}\left(\frac{1-(xw)^{2m+1}}{1-xw}\right)
\end{equation}
which gives \eqref{eq:F-closed-factor}.
\end{proof}

In $\mathbf{C}[[x,z,w]]$ the series $1-xw$ has constant term $1$ and is therefore a unit, so
$$
\frac{1}{1-xw} = \sum_{t \geq 0} (xw)^t.
$$
In $\mathbf{C}[z,w][[x]]$ we may write, by Lemma~\ref{lem:trivariate_F_identity},
$$
F(x;z,w) = \frac{1}{1-xw} \sum_{m=0}^{\infty}x^{m^2} z^m \left(1 - (xw)^{2m+1}\right).
$$
For each $m$, the factor $1 - (xw)^{2m+1}$ is divisible by $1-xw$ (indeed $1 - u^{2m+1} = (1-u)(1+u+\cdots+u^{2m})$ with $u=xw$). Hence the entire sum in parentheses lies in the principal ideal $(1-xw)$, and the prefactor $1/(1-xw)$ cancels. After cancellation we can specialize $w=1/x$ in the Laurent series ring $\mathbf{C}[z]((x))$, obtaining
$$
F(x;z,x^{-1}) = \sum_{m=0}^{\infty}(2m+1) \, z^m \, x^{m^2}.
$$

Thus the apparent pole at $xw=1$ is removable along the specialization $w=1/x$.

\begin{lemma}\label{lem:conv_F}
Fix $(z,w)\in\mathbf C^{2}$. Then:
\begin{itemize}
\item[(\textit{i})] If $|x|<1$, the series $$\sum_{m=0}^{\infty}x^{m^{2}}z^{m}\bigl(1-(xw)^{2m+1}\bigr)$$ converges absolutely.
\item[(\textit{ii})] If $|x|>1$, $z\neq 0$, and $xw\neq 1$, then the general term does not tend to $0$, hence the series diverges.
\item[(\textit{iii})] On the unit circle $|x|=1$, absolute convergence can hold for special cases.
\end{itemize}
In particular, $|x|<1$ is the largest $x$--region on which absolute convergence holds for \emph{all} parameter choices $(z,w)$.
\end{lemma}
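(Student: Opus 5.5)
The plan is to treat the three parts separately, working with the general term $T_m = x^{m^2}z^m\bigl(1-(xw)^{2m+1}\bigr)$, and to finish with the maximality remark.

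\emph{Part (i).} Fix $|x|<1$ and any $(z,w)$. The triangle inequality gives
\[
|T_m| \le |x|^{m^2}|z|^m + |x|^{m^2+2m+1}|z|^m|w|^{2m+1}.
\]
Both pieces have the form $C\,|x|^{m^2}R^m$ for fixed constants $C,R\ge 0$. The root test applies directly, since
\[
\bigl(|x|^{m^2}R^m\bigr)^{1/m} = |x|^m R \to 0 .
\]
A comparison with a geometric series would work equally well. So $\sum |T_m|$ converges.

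\emph{Part (ii).} Fix $|x|>1$, $z\neq0$ and $xw\neq1$. I must show $T_m\not\to0$, and I would split on the size of $|xw|$.
\begin{itemize}
\item If $|xw|<1$, then $|1-(xw)^{2m+1}|\ge 1-|xw|>0$. Also $|x|^{m^2}|z|^m\to\infty$, because $m^2\log|x|+m\log|z|\to+\infty$ as the quadratic term dominates.
\item If $|xw|>1$, then $|1-(xw)^{2m+1}|\ge |xw|^{2m+1}-1\ge |xw|-1>0$, and the same growth argument applies.
\item If $|xw|=1$ with $xw\neq1$, write $u=xw=e^{i\theta}$, $\theta\notin 2\pi\mathbf Z$. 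The factor $|1-u^{2m+1}|$ might be small along some subsequence.
\end{itemize}
In the unit-modulus case I would not use a lower bound on that factor for every $m$. Instead I would exploit that $u^{2m+1}=1$ and $u^{2m+3}=1$ together force $u^2=1$, so $u=-1$. When $u=-1$ the factor equals $2$ for every $m$. Otherwise, among any two consecutive indices at least one has $u^{2m+1}\ne1$. That alone does not bound the factor below, so I would use a cleaner argument. Suppose for contradiction that $T_m\to0$. Then $1-u^{2m+1} = T_m x^{-m^2}z^{-m}$, and $|x^{-m^2}z^{-m}|\to0$, so $u^{2m+1}\to1$. Applying this at $m$ and $m+1$ gives $u^2 = u^{2m+3}/u^{2m+1}\to1$, hence $u^2=1$. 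Since $u\ne1$, this leaves $u=-1$, and then $u^{2m+1}=-1\not\to1$, a contradiction.

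This contradiction argument covers all three cases at once, so I would present it uniformly as the main argument. The one real subtlety is making sure that $|x|^{m^2}|z|^m\to\infty$ even when $|z|$ is tiny, and that holds because the quadratic exponent dominates.

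\emph{Part (iii).} This is existential, so one example suffices. Take $|x|=1$, $z=0$: only the $m=0$ term survives, so absolute convergence holds trivially. For a less trivial example, take $|x|=1$, $|z|<1$ and $|w|\le1$. Then $|T_m|\le 2|z|^m$, and the series converges absolutely.

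\emph{Final sentence.} Part (i) gives absolute convergence on $|x|<1$ for all $(z,w)$. Part (ii) shows that no point with $|x|>1$ works for all parameters, for instance $z=1$, $w=0$. On $|x|=1$ the same choice $z=1$, $w=0$ gives $|T_m|=1$, so the series diverges there too. Hence $|x|<1$ is the largest such region. I expect the main obstacle to be the $|xw|=1$ subcase of (ii), which the ratio trick above handles.
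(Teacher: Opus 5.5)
Your proof is correct. For (i) and (ii) it follows the same outline as the paper.

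\textbf{Part (i).} The paper bounds $|T_m|\le 2|x|^{m^2}|z|^m\max\{1,|w|^{2m+1}\}$ and appeals to super-exponential decay. Your root test is the same idea.

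\textbf{Part (ii).} The paper argues that $|x|^{m^2}|z|^m=e^{cm^2+dm}\to\infty$, while $1-(xw)^{2m+1}$ ``is bounded away from $0$ for infinitely many $m$ when $xw\neq1$''. The paper never justifies that second assertion, and it is the only delicate point when $|xw|=1$. Your contradiction argument supplies exactly this justification: $u^{2m+1}\to1$ forces $u^2=u^{2m+3}/u^{2m+1}\to1$, hence $u=-1$, which is impossible. Division is legitimate because $u\neq0$ follows from $u^{2m+1}\to1$.

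\textbf{Part (iii) and maximality.} Here you genuinely depart from the paper. The paper's proof uses $x=-1$, $z=w=1$, where $|T_m|=2$. That is a case where absolute convergence \emph{fails}. It supports the maximality sentence but does not prove (iii) as stated, which asserts that convergence \emph{can} hold on $|x|=1$. Your examples ($z=0$, or $|z|<1$, $|w|\le 1$) prove (iii) directly.

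For maximality, your choice $z=1$, $w=0$ gives $|T_m|=1$ at every point of the unit circle. The paper's example covers only $x=-1$, and it does not extend verbatim: $z=w=1$ gives $T_m\equiv 0$ at $x=1$. So your route gives a complete proof of both (iii) and the ``largest region'' claim at no extra cost.
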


\begin{proof}
Starting from the closed form ~\ref{eq:F-geometric} we distribute inside the summation and compute the second term in the parentheses:
$$
x^{m^{2}} z^{m}(xw)^{2m+1}= x^{m^{2}} z^{m}\,x^{2m+1}w^{2m+1}= x^{m^{2}+2m+1}\,z^{m}\,w^{2m+1}.
$$
If we set
$$
T_m(x,z,w):=x^{m^{2}}z^{m}-x^{m^{2}+2m+1}z^{m}w^{\,2m+1}\qquad(m\ge0),
$$
then
\begin{equation}\label{eq:F-T}
F(x;z,w)=\frac{1}{1-xw}\sum_{m=0}^{\infty}T_m(x,z,w).
\end{equation}
Since $|1-(xw)^{2m+1}|\le 1+|xw|^{2m+1}$, we obtain the crude bound 
\begin{align*}
|T_m(x,z,w)|&\le |x|^{m^{2}}|z|^{m}\bigl(1+|xw|^{2m+1}\bigr)\\
& \le 2\,|x|^{m^{2}}|z|^{m}\max\{1,|w|^{2m+1}\}.
\end{align*}
Fix any $(z,w)\in\mathbf C^{2}$. For \emph{(i)} we see that if $|x|<1$ we have $|x|^{m^{2}}\le e^{m^{2}\log|x|}$ with $\log|x|<0$; hence $|x|^{m^{2}}$ decays super-exponentially. Because the other factors grow at most exponentially in $m$, the general term $T_m(x,z,w)$ tends to $0$ super-exponentially and the series $\sum_{m=0}^{\infty}T_m(x,z,w)$ converges absolutely (keeping in mind the prefactor $(1-xw)^{-1}$). For \emph{(ii)}, assume $|x|>1$, $z\neq0$, and $xw\neq1$; set $c=\log |x|>0$ and $d=\log |z|$. Note that $|x|^{m^{2}}|z|^{m}=e^{cm^{2}+dm}.$ The quadratic term $cm^{2}$ eventually dominates the linear term $dm$, while $1-(xw)^{2m+1}$ is bounded away from $0$ for infinitely many $m$ when $xw\neq1$, the terms $T_m(x,z,w)$ do not tend to $0$; hence the sum diverges. Finally \emph{(iii)} can be seen by observing the following: take $x=-1$ and $z=w=1$ then $T_m(-1,1,1)=2(-1)^{m^2}$, so $|T_m|=2$ for all $m$. Hence the terms do not tend to $0$ and the series cannot converge absolutely on $|x|=1$. All of this shows that absolute convergence occurs precisely when $|x|<1$ is the largest $x-$region on which absolute convergence holds for all $(z,w)$.
\end{proof}

\begin{lemma}\label{lem:degeneratelocus}
For fixed $(z,w)\in\mathbf{C}^2$, if $F(x;z,w)$ is rational in $x$ then $z=0$ or $w=\pm1$ and $z=w.$ Conversely if $z=0$ or $w=\pm1$ and $z=w,$ then $F(x;z,w)$ is rational. In particular, outside these cases the coefficient sequence of $F(x;z,w)$ does not satisfy any nontrivial linear recurrence with constant coefficients.
\end{lemma}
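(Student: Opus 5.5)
The plan is to work directly with the coefficient sequence $c(n)=[x^n]F(x;z,w)=z^{m}w^{j}$, where $n=m^2+j$ as in Section~\ref{sec:mappartition}, with the convention $0^0=1$. By the classical correspondence, $F(x;z,w)$ is rational in $x$ if and only if $(c(n))$ eventually satisfies a linear recurrence with constant coefficients. So both directions, and the final sentence of the lemma, reduce to deciding when $(c(n))$ is (eventually) a linear recurrence sequence. We read the condition in the lemma as ``$z=0$, or ($w=\pm1$ and $z=w$)''.

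\emph{Converse direction.} If $z=0$, every block $I_m$ with $m\ge1$ contributes $0$, so $F=1$. If $z=w=1$, then $c(n)=1$ and $F=1/(1-x)$. If $z=w=-1$, then $c(n)=(-1)^{m+j}=(-1)^{m+n-m^2}=(-1)^n$, since $m^2\equiv m \pmod 2$; hence $F=1/(1+x)$. All three are rational.

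\emph{Forward direction.} Assume $z\neq0$ and split according to $w$.
\begin{itemize}
\item[(a)] If $w=0$, then $c(n)=z^m$ when $n=m^2$ and $c(n)=0$ otherwise, so $F=\sum_m z^m x^{m^2}$ is lacunary.
\item[(b)] If $w\neq0$, write $c(n)=w^n b(n)$ with $b(n)=u(m):=z^m w^{-m^2}$. Multiplying coefficients by $w^{-n}$ (that is, substituting $x\mapsto x/w$) preserves rationality, so $\sum b(n)x^n$ is rational. Then so is $(1-x)\sum b(n)x^n=\sum e(n)x^n$, where $e(n)=b(n)-b(n-1)$. This difference sequence vanishes except possibly at perfect squares $n=m^2$, where it equals $u(m)-u(m-1)$.
\end{itemize}
In both cases we therefore have a linear recurrence sequence supported on the squares.

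The key tool is the following fact. If a sequence satisfies $s(n+d)=\sum_{i=1}^{d} \alpha_i s(n+d-i)$ for all large $n$, and it has $d$ consecutive zeros beyond the point where the recurrence holds, then it vanishes from that point on. This follows by forward induction. Since the gaps $2m+1$ between consecutive squares eventually exceed any fixed $d$, the sequence must be eventually zero.

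In case (a) this gives $z^m=0$ for large $m$, contradicting $z\neq0$. In case (b) it gives $u(m)=u(m-1)$ for all large $m$, i.e.\ $z=w^{2m-1}$ for all large $m$. Comparing consecutive values of $m$ yields $w^2=1$, and then $z=w^{2m-1}=w$. This is exactly the stated locus. The last assertion of the lemma is just the contrapositive, phrased through the recurrence/rationality equivalence.

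The main obstacle I expect is making the ``zero-run forces eventual vanishing'' step airtight. The recurrence may only hold eventually, and it may have trailing zero coefficients; both are handled by taking $d$ to be the full order and working beyond the threshold index. A second point is stating the convention $0^0=1$ cleanly so that the cases $z=0$ and $w=0$ are treated correctly.
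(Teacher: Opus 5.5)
Your proposal is correct and follows essentially the paper's route. Your rescaling $x\mapsto x/w$ followed by multiplication by $1-x$ is the paper's multiplication of $F$ by $1-xw$ in disguise: $e(n)=w^{-n}[x^n](1-xw)F$, and the paper's $b_m=z^{m-1}(z-w^{2m-1})$ equals $w^{m^2}e(m^2)$. From there both arguments use the same gap-between-squares step to force $z=w^{2m-1}$ for all large $m$. The only difference is that working with $1-xw$ directly covers $w=0$ without a separate case, while your explicit case (a) makes visible a point the paper leaves implicit: $z=w^{2m-1}$ with $z\neq0$ forces $w\neq0$ before one may conclude $w^2=1$.
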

\begin{proof}
It helps to introduce an auxiliary function and work the lemma through it. Starting from Lemma~\ref{lem:conv_F}, Equation ~\ref{eq:F-T} write
$$
F(x;z,w)=\frac{1}{1-xw}\sum_{m=0}^{\infty}T_m(x,z,w).
$$
Split and reindex the second sum, isolate the $m=0$ term and combine the rest:
\begin{align*}
\sum_{m=0}^{\infty}T_m(x,z,w)
&=\sum_{m=0}^{\infty}x^{m^2}z^m\;-\;\sum_{m=0}^{\infty}x^{m^2+2m+1}z^m w^{2m+1}\\
&=\sum_{m=0}^{\infty}x^{m^2}z^m\;-\;\sum_{r=1}^{\infty}x^{r^2}z^{\,r-1} w^{2r-1},\\
&=1+\sum_{m=1}^{\infty}\Bigl(x^{m^2}z^m - x^{m^2}z^{\,m-1}w^{2m-1}\Bigr)\\
&=1+\sum_{m=1}^{\infty}x^{m^2}z^{\,m-1}\bigl(z-w^{2m-1}\bigr).
\end{align*}
Hence
\begin{equation*}
F(x;z,w)=\frac{1}{1-xw}\Bigl(1+\sum_{m=1}^{\infty}x^{m^2}z^{\,m-1}\bigl(z-w^{2m-1}\bigr)\Bigr).
\end{equation*}
Set 
\begin{equation}\label{eq:B}
B(x):=\sum_{m=1}^{\infty} x^{m^2}\,b_m
\end{equation}
with $b_m:=z^{m-1}(z-w^{2m-1}).$
Then 
\begin{equation}\label{eq:F-B}
F(x;z,w)=\frac{1+B(x)}{1-xw}.
\end{equation} 
Suppose $F(x;z,w)$ is rational in $x$. From \eqref{eq:F-B} we have
$B(x)$ is rational. Let $s(n)$ be its coefficients in $\mathbf{C}[[x]]$: $s(m^2)=b_m$ and $s(n)=0$ for non squares. Rationality implies a constant coefficient recurrence $s(n)=\sum_{i=1}^r c_i s(n-i)$ for all large $n$. Choose $m>r$. Since $m^2-r,\ldots,m^2-1$ are not squares, $s$ vanishes there, so the recurrence forces $s(m^2)=0$, i.e.\ $b_m=0$. Thus $b_m=0$ for all sufficiently large $m$. If $z\neq 0$, then for all $m\geq M$ we have $z=w^{2m-1}$. Comparing $m$ and $m+1$ gives $w^{2m-1}=w^{2m+1}$, hence $w^2=1$; therefore $w=\pm 1$ and then $z=w$. Now if $z=0$, then by Equation \eqref{eq:F-B}, we have $b_1=-w$ and $b_m=0$ for $m\ge2$, and by Equation \eqref{eq:B} we have $B(x)=-wx$ so that
$$
F(x;0,w)=\frac{1-wx}{1-wx}=1 \quad \text{in }\mathbf C[[x]].
$$
and is rational. If $w=\pm1$ and $z=w$, then $F(x;w,w)=1/(1-xw)$. These are rational.
\end{proof}

\begin{lemma}\label{lem:F_not_rational}
$F(x;z,w)$ is \emph{not} a rational function of $x$ with coefficients in $\mathbf Q(z,w)$.
\end{lemma}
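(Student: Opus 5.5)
We argue by contradiction, reusing the representation \eqref{eq:F-B} from the proof of Lemma~\ref{lem:degeneratelocus}, now read over the field $K=\mathbf{Q}(z,w)$ instead of after specialization. Suppose $F(x;z,w)=P(x)/Q(x)$ with $P,Q\in K[x]$ and $Q(0)\neq 0$. Since $1-xw$ is a polynomial in $x$ over $K$, \eqref{eq:F-B} gives
\[
1+B(x)=(1-xw)\,F(x;z,w)=\frac{(1-xw)P(x)}{Q(x)},
\]
so $B(x)=\sum_{m\ge1}b_m x^{m^2}$ with $b_m=z^{m-1}(z-w^{2m-1})$ is also a rational function of $x$ over $K$.

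The next step is the gap argument from Lemma~\ref{lem:degeneratelocus}, carried out verbatim over $K$. A rational power series over a field has coefficients satisfying a linear recurrence $s(n)=\sum_{i=1}^{r}c_i s(n-i)$ for all $n\ge N$, with $c_i\in K$ and $r=\deg Q$. The coefficients $s(n)$ of $B$ vanish off the squares. For $m>r$ with $m^2\ge N$, the $r$ integers $m^2-r,\dots,m^2-1$ lie strictly between $(m-1)^2$ and $m^2$, because $2m-1>r$. Hence the recurrence forces $s(m^2)=b_m=0$ in $K$ for every sufficiently large $m$.

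The final step shows this is impossible, since $b_m\neq0$ in $K$ for all $m\ge1$. The factor $z^{m-1}$ is a nonzero element of $\mathbf{Q}[z,w]$. The factor $z-w^{2m-1}$ is also a nonzero polynomial, because $z$ and $w$ are algebraically independent, so their difference is not the zero polynomial. $\mathbf{Q}[z,w]$ is an integral domain, so the product is nonzero. This contradicts the previous step. Equivalently, one may specialize: a rational identity over $K$ would, for generic $(z,w)\in\mathbf{C}^2$ avoiding the zeros of the denominators, specialize to a rational identity, contradicting Lemma~\ref{lem:degeneratelocus} at, say, $z=2$, $w=3$. The direct argument is cleaner.

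The main obstacle is being careful that the recurrence step is valid over the field $K$. This holds because the Taylor expansion of $P/Q$ with $Q(0)\neq0$ gives the recurrence with coefficients in $K$, exactly as over $\mathbf{C}$. One must also check that the coefficients of $F$, which lie in $\mathbf{Q}[z,w]\subset K$, embed compatibly into $K[[x]]$, so that "rational over $K$" is meaningful. Once these points are settled, the gaps between consecutive squares do all the work.
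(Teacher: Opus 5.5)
Your proposal is correct and follows the paper's proof essentially step for step. Like the paper, you pass from $F$ to $B(x)=(1-xw)F(x;z,w)-1$, use the gaps between consecutive squares to force $b_m=0$ in $\mathbf{Q}(z,w)$ for large $m$, and contradict this with $b_m=z^{m-1}(z-w^{2m-1})\neq 0$. You are slightly more careful than the paper in two places: you require $m^2\ge N$ so that the recurrence actually applies at $n=m^2$, and you check that $m^2-r,\dots,m^2-1$ stay above $(m-1)^2$.
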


\begin{proof}
Work over the field $\mathbf{Q}(z,w)$, where $z,w$ are algebraically independent.
From equations ~\ref{eq:B} and ~\ref{eq:F-B} we see that if $F(x;z,w)$ were rational over $\mathbf{Q}(z,w)(x)$ then $B(x)=(1-xw)F(x;z,w)-1$ would also be rational over $\mathbf{Q}(z,w)(x)$. Just the same as Lemma ~\ref{lem:degeneratelocus} write $s(n) =[x^{n}]B(x)$ with $s(m^{2})=b_m$ and $s(n)=0$ for non-squares.
A rational generating function over a field has a coefficient sequence satisfying a linear recurrence with constant coefficients:
there exist $r\ge1$ and $c_1,\dots,c_r\in \mathbf{Q}(z,w)$ such that
$$
s(n)=\sum_{i=1}^{r}c_i\,s(n-i)\qquad\text{for all sufficiently large }n.
$$
Choose $m>r$. Then the integers $m^{2}-r,\dots,m^{2}-1$ are not squares, hence $s(m^{2}-i)=0$ for $1\le i\le r$.
Applying the recurrence at $n=m^{2}$ yields $s(m^{2})=0$ in $\mathbf{Q}(z,w)$.
But $s(m^{2})=b_m=z^{m-1}(z-w^{2m-1})\in\mathbf{Q}(z,w)$ is \emph{not} the zero element of $\mathbf{Q}(z,w)$ -- it is a nonzero polynomial in the algebraically independent variables $z,w$. This contradiction shows $B(x)$ is not rational over $\mathbf{Q}(z,w)(x)$, and therefore $F(x;z,w)$ is not rational over it as well.
\end{proof}

\begin{remark}\label{rem:F_transcendental}
We claim $F(x;z,w)$ is \emph{transcendental} over $\mathbf{Q}(z,w)(x).$ That is it satisfies no nonzero polynomial equation
$P(x,z,w,Y)=0$ with $P\in\mathbf{Q}(z,w)[x,Y]$.
Assume, towards a contradiction, that $F$ is algebraic over $\mathbf{Q}(z,w)(x)$.
Then there exists a nonzero $Q(x,z,w,Y)\in\mathbf{Q}(z,w)[x,Y]$ with 
$Q\bigl(x,z,w,F(x;z,w)\bigr)=0.$ Clear denominators by choosing a nonzero $R(z,w)\in\mathbf{Q}[z,w]$ so that
$$
P(x,z,w,Y):=R(z,w)\,Q(x,z,w,Y)\in\mathbf{Q}[z,w][x,Y],
$$
and $P\bigl(x,z,w,F(x;z,w)\bigr)=0$.
Because $\mathbf{Q}[z,w]$ is a UFD, let $k\ge0$ be the largest integer such that
$w^{k}$ divides \emph{all} coefficients of $P$ (viewed in $\mathbf{Q}[z,w]$); replace $P$ by $P/w^{k}$.
Then the specialization $P(x,z,0,Y)$ is a \emph{nonzero} polynomial in $\mathbf{Q}[z][x,Y]$, and
$
P\bigl(x,z,0,F(x;z,0)\bigr)=0.
$
Now regard the coefficients of $P(x,z,0,Y)$ as polynomials in $z$; let $\ell\ge0$ be the largest integer such that $(z-1)^{\ell}$ divides all those coefficients in $\mathbf{Q}[z]$, and replace $P(x,z,0,Y)$ by
$P(x,z,0,Y)/(z-1)^{\ell}$.
Then $P(x,1,0,Y)\in\mathbf{Q}[x,Y]$ is \emph{nonzero}, and we obtain a nontrivial algebraic relation
$$
P\bigl(x,1,0,F(x;1,0)\bigr)=0.
$$
But $F(x;1,0)=\sum_{m\ge0}x^{m^{2}}=:S(x)$ has coefficients in the finite set $\{0,1\}$, hence by Fatou’s theorem (1906) \footnote{See, e.g., G.~Pólya and G.~Szegő, \emph{Aufgaben und Lehrsätze aus der Analysis}, vol.~II, Aufgabe 236. Note Fatou’s 1906 theorem disallows finitely valued algebraic series that aren’t rational}, a finitely valued algebraic power series is rational. By Lemma~\ref{lem:degeneratelocus}, $S(x)$ is not rational; thus $S(x)$ cannot be algebraic--- a contradiction. Therefore $F(x;z,w)$ is transcendental over $\mathbf{Q}(z,w)(x)$.
\end{remark}

\begin{remark}
A reader might object to the worked-out derivation in the preceding discussion. In addition to Remark ~\ref{rem:scope}, I might add the overly pedantic work serves as part of my own self-education. Indeed the degeneracy locus of $F$ made analytic arguments more delicate, and harder to work through than I had anticipated. At a minimum the detailed derivation serves as a warmup for the calculations that are to follow. Finally the astute reader might have observed the appearance of the expression $\sum_{m=0}^{\infty}x^{m^2}$, which suggests the use of the modular function theory. For now, I prefer to work in the classical generating function setting. The Jacobi Triple Product can wait.
\end{remark}

\section{Linear Projection by \texorpdfstring{$\varphi$}{phi} and The Series \texorpdfstring{$G(x)$}{G(x)}}\label{sec:linprojection}
\begin{lemma}\label{lem:parity-projection} 
The following holds:
\begin{enumerate}[label=(\roman*)]
\item \emph{Existence/uniqueness:} There exists a unique $\mathbf{C}$–linear functional $L_w:\mathbf{C}[w]\to\mathbf{C}$ such that $L_w(1)=0,$ and for $r \geq 1,$ $L_w(w^{2r+1})=+1,$ and $L_w(w^{2r})=-1.$ 
\item \emph{Specialization identity:} Define $\varphi:\mathbf{C}[[x,z,w]]\to\mathbf{C}[[x]]$ on monomials by
$ \varphi \bigl(x^n z^m w^j\bigr)=x^n\,L_w(w^j),$ and extend to all series by linearity. Then
$$
\varphi\bigl(F(x;z,w)\bigr)=F(x;1,0)-F(x;1,-1),
$$
\item \emph{Coefficient extraction:} If $$G(x):=\varphi \bigl(F(x;z,w)\bigr)=\sum_{n=0}^\infty a(n)\,x^n$$, then the coefficients satisfy
$$
a(n)=
\begin{cases}
0,& \text{if }n\text{ is a perfect square},\\[3pt]
(-1)^{\,j-1},& \text{if }n=m^2+j\text{ with }1\le j\le 2m.
\end{cases}
$$
\end{enumerate}
\end{lemma}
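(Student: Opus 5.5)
The plan is to handle the three parts in order. Part (i) comes from the monomial basis, part (ii) from applying $\varphi$ term by term to the explicit double sum \eqref{eq:F-explicit}, and part (iii) by reading off coefficients. Nothing analytic is needed: every identity lives in $\mathbf{C}[[x]]$, and each power of $x$ is touched by exactly one monomial of $F$.

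For (i), I would use that $\{1,w,w^2,\dots\}$ is a $\mathbf{C}$-basis of $\mathbf{C}[w]$. A linear functional is therefore determined by, and can be freely prescribed on, its values on the basis. As literally stated, the conditions fix $L_w$ on $1$, on $w^{2r}$ for $r\ge1$, and on $w^{2r+1}$ for $r\ge1$. They leave $L_w(w)$ unconstrained, so uniqueness requires the odd condition to hold for $r\ge0$, i.e.\ $L_w(w)=+1$. Part (iii) forces this anyway, since the case $j=1$ needs $a(n)=(-1)^0=+1$. I would state this reading explicitly and prove the lemma with it.

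Existence is then witnessed concretely by $L_w(P)=P(0)-P(-1)$, which is plainly $\mathbf{C}$-linear. On monomials, $L_w(1)=1-1=0$, and for $j\ge1$ we have $L_w(w^j)=0^j-(-1)^j=(-1)^{j-1}$. This gives $+1$ for odd $j$ and $-1$ for even $j\ge2$, as required. Uniqueness follows because any two functionals with these values agree on a basis.

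For (ii), I would first check that $\varphi$ is well defined on $\mathbf{C}[[x,z,w]]$. For fixed $n$, a general series may contain infinitely many monomials $x^nz^mw^j$, so $\varphi$ need not make sense everywhere. The clean fix is to apply $\varphi$ only to series whose $x^n$-coefficient is a polynomial in $z,w$, i.e.\ to elements of $\mathbf{C}[z,w][[x]]$, the ambient ring of Section~\ref{sec:notation}. Coefficientwise, $\varphi$ is then the map $\mathbf{C}[z,w]\to\mathbf{C}$ given by $P(z,w)\mapsto P(1,0)-P(1,-1)$. This is precisely ``ignore $z$ (set $z=1$), apply $L_w$'', since $\varphi$ discards the $z$-exponent. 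With this description,
\[
\varphi(F)=\sum_n x^n\bigl(z^mw^j|_{(1,0)}-z^mw^j|_{(1,-1)}\bigr)=F(x;1,0)-F(x;1,-1),
\]
because specialization also acts coefficientwise on $\mathbf{C}[z,w][[x]]$.

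For (iii), I would use \eqref{eq:F-explicit} and the bijection $f$ of Section~\ref{sec:mappartition}. The coefficient of $x^n$ in $F$ is the single monomial $z^mw^j$ with $(m,j)=f(n)$. Hence $a(n)=L_w(w^j)$, which by the computation in (i) is $0$ when $j=0$, i.e.\ when $n$ is a perfect square, and $(-1)^{j-1}$ when $1\le j\le 2m$.

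The main obstacle is not computational. It is the bookkeeping in (i) and (ii): the missing $r=0$ case in the definition of $L_w$, and the domain on which $\varphi$ is legitimately defined. Once both are fixed as above, the rest is immediate.
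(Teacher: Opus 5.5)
Your proposal is correct and follows essentially the paper's argument: $L_w(P)=P(0)-P(-1)$ gives existence, $\varphi$ is applied termwise, and $a(n)=L_w(w^j)$ is read off from the unique monomial carrying $x^n$. The one cosmetic difference is that the paper groups terms by blocks $\sum_{j=0}^{2m}(xw)^j$ while you work coefficient by coefficient. Your two bookkeeping fixes are sound and sharper than the paper: it asserts uniqueness from the listed values even though $L_w(w)$ is not among them, and it extends $\varphi$ ``by linearity'' to all of $\mathbf{C}[[x,z,w]]$, where it is not well defined.
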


\begin{proof}
\emph{Existence/uniqueness of $L_w$.}
Since $\{1,w,w^2,\dots\}$ is a standard $\mathbf{C}$–basis of $\mathbf{C}[w]$, the listed values determine at most one linear functional.
Define $L_w(P):=P(0)-P(-1)$. Then $L_w(1)=1-1=0$, $L_w(w)=0-(-1)=1$, $L_w(w^2)=0-1=-1$, and in general
\begin{equation}\label{eq:L-1}
L_w(w^j)=0-(-1)^j=(-1)^{j-1}
\end{equation}
for $j\ge1$, so $L_w$ has the required values. \emph{Specialization identity.}
Reindex $F$ by $n=m^2+j$ with $m\ge0$ and $0\le j\le 2m$:
\begin{align*}
F(x;z,w)&=\sum_{m=0}^\infty\sum_{j=0}^{2m}x^{\,m^2+j}\,z^{\,m}\,w^{\,j}\\[1 ex]
&=\sum_{m=0}^\infty x^{\,m^2}z^{\,m}\Bigl(\sum_{j=0}^{2m} (xw)^{j}\Bigr).
\end{align*}
For each $m$ the inner sum is a \emph{finite} polynomial in $w$, hence
\begin{equation}\label{eq:F-phi-1}
\varphi\bigl(F(x;z,w)\bigr)
=\sum_{m=0}^\infty x^{\,m^2}\,L_w\Bigl(\sum_{j=0}^{2m} (xw)^{j}\Bigr).
\end{equation}
Using $L_w(P)=P(0)-P(-1)$ gives
$$
L_w\Bigl(\sum_{j=0}^{2m} (xw)^{j}\Bigr)
=\sum_{j=0}^{2m} (x\cdot 0)^{j}-\sum_{j=0}^{2m} (x\cdot(-1))^{j},
$$
hence
\begin{align}\label{eq:F-phi-2}
\varphi\bigl(F(x;z,w)\bigr)&=\sum_{m=0}^\infty x^{\,m^2}-\sum_{m=0}^\infty x^{\,m^2}\!\left(\sum_{j=0}^{2m}(-x)^{j}\right)\\[1ex]
&=F(x;1,0)-F(x;1,-1),\label{eq:phi-F}
\end{align}
which lies in $\mathbf{C}[[x]]$.
\emph{Coefficient extraction.}
Recall equation ~\ref{eq:F-phi-1} and write 
\begin{equation}\label{eq:G-1}
G(x)=\varphi\bigl(F(x;z,w)\bigr)=\sum_{m=0}^\infty\sum_{j=0}^{2m} L_w(w^{\,j})\,x^{\,m^2+j}.
\end{equation}
Fix $n\ge0$ and write $n$ uniquely as $n=m^2+j$ with $m\ge0$ and $0\le j\le 2m$.
Then $x^n$ appears exactly once in the double sum, so
$$
a(n)=[x^n]\,G(x)=L_w(w^{\,j}).
$$
Using the values of $L_w$, see ~\ref{eq:L-1}, already established yields the stated cases.
\end{proof}

\begin{remark} Let $\odot$ denote the Hadamard product in the variable $w$:
if $A(w)=\sum_{j\ge0}a_j w^j$ and $B(w)=\sum_{j\ge0}b_j w^j$, then
$(A\odot B)(w)=\sum_{j\ge0}a_j b_j\,w^j$.
Set
$$
\Lambda(w):=\sum_{j\ge0}L_w(w^j)\,w^j=\sum_{j\ge1}(-1)^{j-1}w^j=\frac{w}{1+w}.
$$
Then for every polynomial $P(w)\in\mathbf C[w]$ one has
$L_w(P)=(P\odot \Lambda)(1),$ since $P\odot\Lambda$ is again a polynomial 
and evaluation at $w=1$ is formal. Equivalently,
$L_w(P)=\operatorname{CT}_w\!\bigl   (P(w^{-1})\,\Lambda(w)\bigr).$
\end{remark}

\begin{corollary}\label{cor:a-nonrecurrence}
The sequence $a(n)$ does not satisfy any nontrivial linear recurrence with constant coefficients.
\end{corollary}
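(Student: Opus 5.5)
The plan is to argue by contradiction, reducing to the statement that $G(x)$ is not rational. Suppose $a(n)$ satisfied a recurrence $a(n)=\sum_{i=1}^{r}c_i\,a(n-i)$ for all sufficiently large $n$, with $c_i\in\mathbf{C}$ and $c_r\neq 0$. Then $G(x)=\sum a(n)x^n$ would be a rational function of $x$. Rather than working with $a(n)$ directly, I would pass to an auxiliary series whose support is sparse, exactly as in Lemma~\ref{lem:degeneratelocus}. The natural choice is to multiply by $1+x$. By Lemma~\ref{lem:parity-projection} the coefficients of $G$ alternate in sign within each block, so the convolution with $1+x$ should collapse almost everything.

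The key computation is the sequence $s(n):=a(n)+a(n-1)=[x^n](1+x)G(x)$. I would check it case by case.
\begin{itemize}
\item If $n$ and $n-1$ lie in the same block $I_m$ with $2\le j\le 2m$, then $a(n)=(-1)^{j-1}$ and $a(n-1)=(-1)^{j-2}$, so $s(n)=0$.
\item If $j=1$, then $n-1=m^2$ is a square, so $s(n)=1+0=1$.
\item If $j=0$ (that is, $n=m^2$ with $m\ge1$), then $n-1=(m-1)^2+2(m-1)$ has even offset, so $s(n)=0+(-1)=-1$.
\end{itemize}
Hence
$$
(1+x)G(x)=\sum_{m\ge1}\bigl(x^{m^2+1}-x^{m^2}\bigr)=(x-1)\Bigl(\sum_{m\ge0}x^{m^2}-1\Bigr)=(x-1)\bigl(F(x;1,0)-1\bigr).
$$
Alternatively, one could reach the same place through Lemma~\ref{lem:parity-projection}(ii), using $G=F(x;1,0)-F(x;1,-1)$ together with Equation~\eqref{eq:F-B} at $(z,w)=(1,-1)$.

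From this identity, rationality of $G$ would force rationality of $S(x)=F(x;1,0)=\sum_{m\ge0}x^{m^2}$, since $x-1$ is invertible in $\mathbf{C}(x)$. But $(z,w)=(1,0)$ lies outside the degeneracy locus of Lemma~\ref{lem:degeneratelocus}: there $z\ne0$ and $w\ne\pm1$. So $S(x)$ is not rational, which is a contradiction. If one prefers to avoid citing the lemma, the gap argument can be repeated directly. The coefficients of $S$ vanish on $m^2-r,\dots,m^2-1$ for $m>r$, so the recurrence would force $[x^{m^2}]S=0$, which is false.

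The main subtlety I expect is the edge bookkeeping. The boundary cases $m=0,1$ (where $a(0)=0$ and $I_0=\{0\}$) must be handled correctly, and one must confirm that the identity for $(1+x)G(x)$ is exact. It is also worth noting a point of logic. Lemma~\ref{lem:degeneratelocus} is a statement about $F(x;z,w)$, whereas the non-rational series we actually need is $S=F(x;1,0)$. Since $S$ is itself a specialization of $F$, citing the lemma is legitimate. Even so, I would include the one-line gap argument to keep the proof self-contained.
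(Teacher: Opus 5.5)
Your route is the same as the paper's: multiply $G$ by $1+x$, express the result through $S(x)=F(x;1,0)=\sum_{m\ge0}x^{m^2}$, and conclude that $G$ cannot be rational because $S$ is not. The paper gets the non-rationality of $S$ from Lemma~\ref{lem:degeneratelocus}; your gap argument is an equivalent alternative.

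There is, however, a slip in exactly the edge bookkeeping you warned about. Your $j=0$ case fails for $m=1$.

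At $n=1$ the predecessor $n-1=0$ has offset $2(m-1)=0$, so it is the square $0^2$. Hence $a(0)=0$, not $-1$, and $s(1)=a(1)+a(0)=0$. The correct identity is
\[
(1+x)G(x)=x^{2}+\sum_{m\ge2}\bigl(x^{m^2+1}-x^{m^2}\bigr)=1+(x-1)\sum_{m\ge0}x^{m^2}.
\]
This is precisely the paper's formula. It is also what your alternative route gives. At $(z,w)=(1,-1)$ one has $b_m=2$ in Equation~\eqref{eq:F-B}, so $F(x;1,-1)=(2S-1)/(1+x)$. Then $(1+x)G=(1+x)S-(2S-1)=1+(x-1)S$.

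Your displayed identity $(1+x)G(x)=(x-1)(S-1)$ is off by the polynomial $x$. Its coefficient of $x$ is $-1$, whereas the true coefficient is $s(1)=0$. So as written that equation is false, and carrying out your alternative route would have exposed the inconsistency.

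Adding a polynomial does not affect rationality. Since $x-1$ is a unit in $\mathbf{C}(x)$, rationality of $G$ still forces rationality of $S$. The contradiction argument therefore goes through verbatim once the identity is corrected.
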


\begin{proof}
Using Lemma~\ref{lem:trivariate_F_identity} with $z=1$ and the identity from Equation~\ref{eq:phi-F} we compute
\begin{align*}
(1+x)G(x)&=1+(x-1)\sum_{m=0}^{\infty}x^{m^2}.\\
\end{align*}
By Lemma~\ref{lem:degeneratelocus}, the series $F(x;1,0)=\sum_{m=0}^{\infty}x^{m^2}$ is not rational; hence the right-hand side above is not rational. Therefore $(1+x)G(x)$ is not rational, and consequently $G(x)$ itself is not rational. It is standard that an ordinary generating function is rational if and only if its coefficient sequence satisfies a linear recurrence with constant coefficients. Thus $a(n)$ is not linearly recurrent.
\end{proof}

\begin{lemma}
\begin{equation}\label{eq:ogen}
G(x) = \sum_{m=1}^{\infty} \frac{x^{m^2+1} \, (1 - x^{2m})}{1+x}.
\end{equation}
\end{lemma}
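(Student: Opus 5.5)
Starting from the block decomposition in Equation~\eqref{eq:G-1}, I will write $G(x)$ as a sum over the blocks $I_m$ and evaluate each block's contribution as a finite geometric sum. Since $a(n)=L_w(w^j)$ and $L_w(1)=0$, the block $m=0$ (which is just $\{0\}$) contributes nothing. For $m\ge1$ the block contributes
\[
x^{m^2}\sum_{j=1}^{2m}(-1)^{j-1}x^{j} \;=\; x^{m^2+1}\sum_{i=0}^{2m-1}(-x)^{i},
\]
after the shift $i=j-1$. Since $(1+x)$ is a unit in $\mathbf{C}[[x]]$, the finite geometric sum can be written as
\[
\sum_{i=0}^{2m-1}(-x)^i=\frac{1-(-x)^{2m}}{1+x}=\frac{1-x^{2m}}{1+x}.
\]
The key parity fact here is that $2m$ is even, so $(-x)^{2m}=x^{2m}$. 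Summing over $m\ge1$ then gives \eqref{eq:ogen}.

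As a cross-check, I would derive the same identity from Lemma~\ref{lem:parity-projection}(ii) together with Lemma~\ref{lem:trivariate_F_identity}. Specializing to $z=1,w=-1$ gives
\[
F(x;1,-1)=\frac{1}{1+x}\sum_{m\ge0}x^{m^2}\bigl(1+x^{2m+1}\bigr),
\]
and of course $F(x;1,0)=\sum_{m\ge0} x^{m^2}$. Subtracting termwise gives
\[
\frac{1}{1+x}\sum_{m\ge0}\bigl(x^{m^2+1}-x^{(m+1)^2}\bigr).
\]
This telescoping-type expression should match \eqref{eq:ogen} once one notes that $x^{m^2+1}(1-x^{2m})=x^{m^2+1}-x^{m^2+2m+1}$ and that the $m=0$ term vanishes.

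The one point needing care, which is the main (mild) obstacle, is justifying the termwise manipulations formally. I will do this by observing that the $m$-th summand has $x$-adic order at least $m^2$, so the infinite sum converges in $\mathbf{C}[[x]]$ and the interchange with multiplication by $(1+x)^{-1}$ is legitimate.
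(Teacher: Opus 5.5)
Your proof is correct, but your main argument takes a different route from the paper. The paper never works with the coefficients $a(n)$ here. It specializes the closed form of Lemma~\ref{lem:trivariate_F_identity} at $z=1$ and $w=0,-1$, subtracts to get $G(x)=\sum_{m\ge0}x^{m^2}-\frac{1}{1+x}\sum_{m\ge0}x^{m^2}\bigl(1+x^{2m+1}\bigr)$, notes that the $m=0$ terms cancel, and simplifies. That is exactly your ``cross-check.''

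Your primary argument instead sums $a(n)x^n$ block by block from \eqref{eq:G-1}, using $L_w(w^j)=(-1)^{j-1}$. This reduces each block to the finite alternating geometric sum $x^{m^2+1}\sum_{i=0}^{2m-1}(-x)^i$. What this buys:
\begin{itemize}
\item It is more elementary and self-contained, needing neither the closed form of $F$ nor the specialization identity.
\item It makes transparent that the factor $1/(1+x)$ comes from the alternating signs within each block.
\item It shows that the $m=0$ block vanishes simply because $L_w(1)=0$.
\end{itemize}

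The paper's route stays inside the framework it advertises: the projection $\varphi$ is applied to the closed form of $F$. As your cross-check makes explicit, it also exposes the telescoping form $\frac{1}{1+x}\sum_{m\ge0}\bigl(x^{m^2+1}-x^{(m+1)^2}\bigr)$, a step the paper compresses into ``simplifying yields.'' Your remark that the $m$-th summand has $x$-adic order at least $m^2$ supplies the formal justification for the termwise manipulations, which the paper leaves implicit.
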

\begin{proof}
Set $z=1$ in $F$ and use Lemma \ref{lem:trivariate_F_identity}:
$$
F(x;1,w)=\frac{1}{1-xw}
\sum_{m=0}^{\infty}x^{m^{2}}\bigl(1-(xw)^{2m+1}\bigr).
$$
Linear projection gives
$G(x)=F(x;1,0)-F(x;1,-1)$.
Substituting $w=0$ and $w=-1$ gives
$$
G(x)
=\sum_{m=0}^{\infty}x^{m^{2}}
-\frac{1}{1+x}\sum_{m=0}^{\infty}x^{m^{2}}\bigl(1+x^{2m+1}\bigr).
$$
The $m=0$ terms cancel, so starting at $m=1$ and simplifying yields,
$$
G(x)=\sum_{m=1}^{\infty}
\frac{x^{m^{2}+1}\,\bigl(1-x^{2m}\bigr)}{1+x},
$$
which is exactly \eqref{eq:ogen}.
\end{proof}

\section{The Series \texorpdfstring{$H(x)$}{H(x)} and The Partial Sums \texorpdfstring{$H(n)$}{H(n)}}\label{sec:ogen-H}
The generating function for the partial sums is given by
$$
H(x) = \frac{G(x)}{1-x} = \frac{1}{1-x}\sum_{n=0}^\infty a(n)x^n.
$$

\begin{theorem}\label{thm:parsum}
The following holds:
\begin{enumerate}[label=(\roman*)]
\item
$$
H(x)=\sum_{n=0}^\infty\Bigl(\sum_{k=0}^{n}a(k)\Bigr)x^n,
$$
so $H(n):=\sum_{k=0}^{n}a(k)$ is the coefficient of $x^n$ in $H(x)$.
\item 
$$
H(n)\;=\;\frac{1-(-1)^{\,n-\lfloor\sqrt{n}\rfloor^{2}}}{2}.
$$
\end{enumerate}
\end{theorem}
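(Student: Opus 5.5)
Part (i) is the standard Cauchy-product fact. In $\mathbf{C}[[x]]$ we have $1/(1-x)=\sum_{t\ge0}x^t$, so
\[
H(x)=\Bigl(\sum_{t\ge0}x^t\Bigr)\Bigl(\sum_{k\ge0}a(k)x^k\Bigr)=\sum_{n\ge0}\Bigl(\sum_{k=0}^{n}a(k)\Bigr)x^n .
\]
Every coefficient of $x^n$ is a finite sum, so the product is well defined. Reading off $[x^n]H(x)$ gives the claim.

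For part (ii) I would compute the partial sums directly from the coefficient formula in Lemma~\ref{lem:parity-projection}(iii), organized along the blocks $I_m$ of Section~\ref{sec:mappartition}. Write $n=m^2+j$ with $0\le j\le 2m$ as given by the bijection $f$. Then
\[
H(n)=\sum_{\mu=0}^{m-1}\sum_{k\in I_\mu}a(k)+\sum_{i=0}^{j}a(m^2+i).
\]
The key step is that each complete block contributes $0$. In $I_\mu$ the offsets run over $i=0,\dots,2\mu$, and the values of $a$ are
\[
0,\ +1,\ -1,\ \dots,\ +1,\ -1 .
\]
This is the zero at the square followed by $2\mu$ alternating terms $(-1)^{i-1}$, an even number, so they cancel in pairs. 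Thus $\sum_{k\in I_\mu}a(k)=0$ for every $\mu$, including $\mu=0$, where $I_0=\{0\}$ and $a(0)=0$. Hence
\[
H(n)=\sum_{i=1}^{j}(-1)^{i-1}.
\]
This alternating sum equals $1$ when $j$ is odd and $0$ when $j$ is even, that is, $\bigl(1-(-1)^j\bigr)/2$. Since $j=n-\lfloor\sqrt n\rfloor^2$, this is exactly the formula in (ii).

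As a cross-check I would also derive the result from the closed form \eqref{eq:ogen}. Telescoping gives $(1-x^{2m})/(1+x)=(1-x)\sum_{i=0}^{2m-1}(-1)^i x^i$, and therefore
\[
H(x)=\sum_{m\ge1}x^{m^2+1}\sum_{i=0}^{2m-1}(-x)^i .
\]
So $[x^n]H(x)=(-1)^{j-1}$ exactly when $1\le j\le 2m$ and $n=m^2+j$, and $0$ otherwise. Since $(-1)^{j-1}$ equals $+1$ only for odd $j$, a further check is needed here. For even $j$ the coefficient should be $0$, not $-1$, so the expression for $H(x)$ above must be an error: dividing by $(1-x)$ does not just remove the factor. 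The correct identity is $\sum_{i=0}^{2m-1}(-x)^i/(1-x)=\sum_{i\text{ even}}x^i$ truncated appropriately. The safer route is therefore the direct block-telescoping argument above, and I would present that as the proof.

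The main obstacle is only the bookkeeping at block boundaries. One must confirm that the square $m^2$ itself contributes $0$, so that $H(m^2)=0$, consistent with $j=0$, and that the last element $m^2+2m$ of each block has even offset, which closes the pairing. Both facts follow from $a(m^2)=0$ and from $2m$ being even.
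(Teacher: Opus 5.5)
Your Cauchy-product argument for (i) and block-by-block partial-sum argument for (ii) are correct, and they are essentially the paper's proof. In the cross-check you abandoned, the slip is the identity itself: $(1-x^{2m})/(1+x)=\sum_{i=0}^{2m-1}(-x)^i$ with no factor $(1-x)$. So your displayed series is actually $G(x)$, whose coefficients $(-1)^{j-1}$ are exactly $a(n)$. Dividing it by $1-x$ gives $\sum_{m\ge1}x^{m^2+1}\sum_{r=0}^{m-1}x^{2r}$, which is the identity $K(x)=H(x)$ of Section~\ref{sec:zerosandones}, and this does confirm (ii).
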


\begin{proof}
Since $\dfrac{1}{1-x}=\sum_{r=0}^\infty x^r$ in $\mathbf{C}[[x]]$, the Cauchy product gives
$$
H(x)=\frac{G(x)}{1-x}
=\Bigl(\sum_{n=0}^\infty a(n)\,x^n\Bigr)\Bigl(\sum_{r=0}^\infty x^r\Bigr)
=\sum_{n=0}^\infty\Bigl(\sum_{k=0}^{n}a(k)\Bigr)x^n,
$$
hence $H(n)=\sum_{k=0}^{n}a(k)$. We recall that $n=m^{2}+j$ with $m=\lfloor\sqrt{n}\rfloor$ and $0\le j\le 2m$. Inside the block
$I_m=\{m^{2},\dots,(m+1)^{2}-1\}$ we have $a(m^{2})=0$ and, for $1\le t\le 2m$,
$a(m^{2}+t)=(-1)^{t-1}$. Therefore the within-block partial sums satisfy
$$
\sum_{t=0}^{j} a(m^{2}+t)=\sum_{t=1}^{j}(-1)^{t-1}
=
\begin{cases}
1,& j\ \text{odd},\\
0,& j\ \text{even}.
\end{cases}
$$
Each full block sums to $0$, so the global partial sum up to $n$ equals this within-block sum. Hence $H(n)=1$ if $j$ is odd, and $0$ if $j$ is even. Equivalently
$$
H(n)=\dfrac{1-(-1)^{\,j}}{2}
=\dfrac{1-(-1)^{\,n-\lfloor\sqrt{n}\rfloor^{2}}}{2}.
$$
\end{proof}


\begin{corollary}\label{cor:H-nonrecurrence}
The values $H(n)$ do not satisfy any nontrivial linear recurrence with constant coefficients.
\end{corollary}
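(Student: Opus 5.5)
The plan is to reduce to Corollary~\ref{cor:a-nonrecurrence}. The sequence $H(n)$ has generating function $H(x)=G(x)/(1-x)$, and it is standard that a sequence satisfies a nontrivial linear recurrence with constant coefficients if and only if its ordinary generating function is rational. So it suffices to show that $H(x)$ is not rational. Suppose, toward a contradiction, that $H(x)=P(x)/Q(x)$ with $P,Q\in\mathbf{C}[x]$. Then $G(x)=(1-x)H(x)=(1-x)P(x)/Q(x)$ would also be rational. This contradicts Corollary~\ref{cor:a-nonrecurrence}, which established that $G(x)$ is not rational. Therefore $H(x)$ is not rational, and $H(n)$ satisfies no nontrivial linear recurrence.

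A direct argument is also available, and I would include it as a cross-check. By Theorem~\ref{thm:parsum}, $H(n)=1$ exactly when the offset $j$ is odd. Hence $a(n)=H(n)-H(n-1)$ for $n\ge1$. If $H$ satisfied a recurrence $\sum_{i=0}^{r}c_iH(n-i)=0$ for large $n$, then applying the difference operator would give the same recurrence for $a(n)$, since the difference of two shifted solutions is again a solution. The recurrence for $a(n)$ remains nontrivial because the characteristic polynomial is unchanged. This again contradicts Corollary~\ref{cor:a-nonrecurrence}.

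The only point needing care is the meaning of ``nontrivial.'' A recurrence holding only eventually still forces rationality of the generating function: the numerator picks up a polynomial correction from the finitely many initial terms, but it remains a polynomial. Also, multiplying or dividing by the unit-free polynomial factor $1-x$ preserves rationality in both directions. I expect no real obstacle here. The substantive work, the nonrationality of $\sum_m x^{m^2}$, was already done in Lemma~\ref{lem:degeneratelocus}.
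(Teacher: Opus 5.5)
Your main argument is correct and is exactly the paper's proof: because $H(x)=G(x)/(1-x)$, if $H(x)$ were rational then $G(x)=(1-x)H(x)$ would also be rational, contradicting Corollary~\ref{cor:a-nonrecurrence}. Your differencing cross-check is also valid, since a recurrence for $H(n)$ passes to $a(n)=H(n)-H(n-1)$ with the same characteristic polynomial; it is the same reduction carried out on coefficients instead of on generating functions.
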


\begin{proof}
Since $H(x)=G(x)/(1-x)$ and $1/(1-x)$ is rational, the non-rationality of $G(x)$ implies the non-rationality of $H(x)$, hence $H(n)$ is not linearly recurrent.
\end{proof}

It helps to write $a(n)$ and $H(n)$ out for small values of $n$ to get a better handle on them:
\vspace{1em} 
\begin{center}
\footnotesize
\renewcommand{\arraystretch}{1.2} 
\setlength{\tabcolsep}{5pt} 
\begin{tabular}{c|*{13}{c}}
$n$    & 0 & 1 & 2 & 3 & 4 & 5 & 6 & 7 & 8 & 9 & 10 & 11 & $\cdots$ \\ \hline
$a(n)$ & 0 & 0 & 1 & $-1$ & 0 & 1 & $-1$ & 1 & $-1$ & 0 & 1 & $-1$ & $\cdots$ \\ \hline
$H(n)$ & 0 & 0 & 1 & 0 & 0 & 1 & 0 & 1 & 0 & 0 & 1 & 0 & $\cdots$
\end{tabular}
\end{center}
\vspace{1em} 

\section{The Congruence Equation for \texorpdfstring{$U(n)$ and $H(n)$}{U(n) and H(n)}}\label{sec:par-U-n}
\begin{theorem}\label{thm:main2}
The following holds:
$$
U(n) \equiv 1 - H(n) \pmod{2}.
$$
\end{theorem}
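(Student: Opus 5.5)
The plan is to combine the parity computation from the introduction with the closed form of Theorem~\ref{thm:parsum}. First I will re-derive, for a fixed $n\ge1$, the congruence $U(n)\equiv n-1+m \pmod 2$ with $m=\lfloor\sqrt{n}\rfloor$, starting from Equation~\eqref{eq:sumdiv}. The key fact is that $d(s)$ is odd exactly when $s$ is a perfect square. This holds because the involution $r\mapsto s/r$ on the divisors of $s$ has a fixed point only when $s$ is a square, and then it has exactly one. Hence $\sum_{s=1}^n d(s)\equiv m\pmod 2$.

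Next I will write $n=m^2+j$ with $0\le j\le 2m$, as in Section~\ref{sec:mappartition}. Since $m^2\equiv m\pmod 2$, we get
\[
n-1+m \;=\; m^2+m+j-1 \;\equiv\; 2m+j-1 \;\equiv\; j+1 \pmod 2 .
\]
This gives $U(n)\equiv j+1\pmod 2$ in one line and avoids the case analysis on the parity of $m$.

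Then I will invoke Theorem~\ref{thm:parsum}(ii), which gives $H(n)=\tfrac{1-(-1)^{j}}{2}$. So $H(n)=1$ exactly when $j$ is odd and $H(n)=0$ exactly when $j$ is even; in either case $H(n)\equiv j\pmod 2$. Substituting yields
\[
1-H(n)\equiv 1-j\equiv j+1\equiv U(n)\pmod 2,
\]
which is the claim.

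The only real obstacle is bookkeeping at the boundary. $U(n)$ is defined only for $n\ge1$, while $H$ is defined from $n=0$, so I will state the theorem for $n\ge1$. If $U(0)$ is interpreted as $0$ (the empty matrix), I will check directly that $1-H(0)=1$ does not match. I would note this as a convention rather than force it.

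I will also verify the formula $U(n)=n-1+\sum_{s\le n} d(s)$ against small cases using the table: $U(1)=1$, $U(2)=1+3=4$, $U(3)=2+5=7$. These agree with $1-H(n)$ modulo $2$, since the table gives $H(1)=0$, $H(2)=1$, $H(3)=0$, so $1-H(n)$ is $1,0,1$ for $n=1,2,3$.
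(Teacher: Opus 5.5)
Your proof is correct and is essentially the paper's argument. You reduce $U(n)$ to $j+1$ modulo $2$ via the Offset Rule and read $H(n)\equiv j \pmod{2}$ off Theorem~\ref{thm:parsum}(ii); your observation $m^2\equiv m\pmod{2}$ just replaces the introduction's case analysis on the parity of $m$ with a one-line computation. Restricting to $n\ge 1$ is sensible since the paper leaves it implicit, though Equation~\eqref{eq:sumdiv} read literally at $n=0$ gives $U(0)=-1$, which does agree with $1-H(0)=1$ modulo $2$.
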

\begin{proof}
By the Offset Rule from the Introduction, $U(n)$ is odd if and only if $j$ is even, equivalently 
$U(n)\equiv j+1\pmod{2}$. 
By Theorem~\ref{thm:parsum}(ii), $H(n)=1$ if and only if $j$ is odd, so $1-H(n)\equiv j+1\pmod{2}$. 
Hence $U(n)\equiv 1-H(n)\pmod{2}$.
\end{proof}

\section{Zeros and Ones}\label{sec:zerosandones}
Following Theorem~\ref{thm:main2}, it is desirable to know the $n$ for which $H(n)=0$ or $H(n)=1.$ Observe that inside each block $I_m$ the values of $a(n)$ split $I_m$ into two disjoint subsets:
\begin{equation}\label{eq:J-def}
J_m=\{\,n\in I_m : a(n)\le0\}=\bigl\{\,m^2 + j : 0\le j\le 2m,\;j\text{ even}\bigr\}
\end{equation}
and
\begin{equation}\label{eq:K-def}
K_m=\{\,n\in I_m : a(n)=+1\}=\bigl\{\,m^2 + j : 1\le j\le 2m,\;j\text{ odd}\bigr\}
\end{equation}
Clearly
$$
I_m=J_m \cup K_m.
$$
Notice that $K_{0}$ is empty but for $m\ge 1,$ $J_{m}$ and $K_{m}$ are nonempty. Thus, if one omits $K_{0}$, the family
$$
J_{0} \cup \bigcup_{m=1}^{\infty} \{\,J_{m},\,K_{m}\}
$$
is a proper \emph{refinement} of $I_{m}$ in the standard sense (every block is nonempty, they are pairwise disjoint, and their union is $\mathbf{N}$). It will be convenient to write 
$$
\mathcal{J}=\bigcup_{m=0}^{\infty} J_m, \quad \text{and} \quad \mathcal{K}=\bigcup_{m=1}^{\infty} K_m.
$$
We illustrate the preceding observation by writing out $J_m,$ and $K_m:$
\vspace{1em} 
\[
\begin{aligned}
J_0&=\{0\},      &\quad K_0&=\varnothing,\\
J_1&=\{1,3\},    &\quad K_1&=\{2\},\\
J_2&=\{4,6,8\},  &\quad K_2&=\{5,7\},\\
J_3&=\{9,11,13,15\},&\quad K_3&=\{10,12,14\},\\
&\vdots          &&\vdots
\end{aligned}
\]
\vspace{1em} 
It follows that,
\begin{equation}\label{eq:Jset}
\mathcal{J}=\{0,1,3,4,6,8,9,11,13,15,\ldots\},
\end{equation}
and 
\begin{equation}\label{eq:Kset}
\mathcal{K}=\{2,5,7,10,12,14,\ldots\}.
\end{equation}

\noindent As in the analysis of $G(x)$, the following section is worked entirely in the formal power--series ring $\mathbf{C}[[x]]$; all manipulations are purely formal, and inverses are taken only for series with unit constant term. We begin with the generating function for $\mathcal{J}$, which records a term $x^n$ exactly when $n\in\mathcal{J}$. First recall the definition of $\mathcal{J}$ as the disjoint union of sets from ~\ref{eq:J-def} and note this shows its generating function $J(x)$ is
$$
J(x)=\sum_{m=0}^{\infty}\;\sum_{n \in J_m}x^{n}.
$$
We introduce the variable $r=j/2$ to reindex the even offset $j\in\{0,2,\ldots,2m\}$ by $r\in\{0,1,\ldots,m\}$ from which the following is established:
\begin{align*}
J(x)&=\sum_{m=0}^{\infty}\;\sum_{r=0}^{m}x^{\,m^2+2r}\\[1ex]
&=\sum_{m=0}^{\infty}x^{m^2}\sum_{r=0}^{m}x^{2r}\\[1ex]
&=\frac{1}{1-x^2}\sum_{m=0}^{\infty}x^{m^2}\Bigl(1-x^{2(m+1)}\Bigr).
\end{align*}

Since $\mathcal{J}$ and $\mathcal{K}$ form a partition of $\mathbf{N}$, we have
$$
J(x)+K(x)=\sum_{n=0}^{\infty}x^n=\frac{1}{1-x},
$$
hence
$$
K(x)=\frac{1}{1-x}-J(x).
$$
Grouping $\mathcal{K}$ by blocks gives
\begin{align*}
K(x)&=\sum_{m=1}^{\infty}\;\sum_{r=0}^{m-1}x^{\,m^2+(2r+1)}\\[1ex]
&=\sum_{m=1}^{\infty}x^{m^2+1}\sum_{r=0}^{m-1}x^{2r}\\[1ex]
&=\frac{1}{1-x^2}\sum_{m=1}^{\infty}x^{m^2+1}\Bigl(1-x^{2m}\Bigr).
\end{align*}
Using $1/(1-x^2)=1/[(1-x)(1+x)]$ and the already-established formula  from ~\ref{eq:ogen}
we obtain
$$
K(x)=\frac{1}{(1-x)(1+x)}\sum_{m=1}^{\infty}x^{m^2+1}\bigl(1-x^{2m}\bigr)
=\frac{G(x)}{1-x}.
$$
Since $H(x)=G(x)/(1-x)$ is the generating function of the partial sums $H(n)$, we conclude
\begin{equation}
K(x)=H(x).
\end{equation}\label{eq:K}
In particular
\begin{equation}\label{eq:HJK}
H(n)=[x^{n}]\,H(x)=[x^{n}]\,K(x)=
\begin{cases}
1,& \text{if } n\in\mathcal{K}\ \text{(odd offset)},\\[2pt]
0,& \text{if } n\in\mathcal{J} \ \text{(even offset)},
\end{cases}
\end{equation}
\begin{remark}
The specializations of $F$ make explicit the refinement $I_m$ by the union of the disjoint sets $J_m$ and $K_m$. Setting $w=1$ suppresses the offset and lists every $n$, so $F(x;1,1)=1/(1-x)$. The even offset and odd offset parts arise from the two linear projections $1/2\bigl(F(x;1,1)+F(x;1,-1)\bigr)=J(x)$ and $1/2\bigl(F(x;1,1)-F(x;1,-1)\bigr)=K(x)$, respectively, while $F(x;1,0)=\sum_{m=0}^{\infty}x^{m^2}$ keeps only the squares, a proper subset of $\mathcal{J}$. Finally, with $G(x)=F(x;1,0)-F(x;1,-1)$ and $H(x)=G(x)/(1-x)$ one has $H(x)=K(x)$, so $H(n)=1$ exactly for $n\in\mathcal{K}$ and $H(n)=0$ exactly for $n\in\mathcal{J}$.
\end{remark}

\section{\texorpdfstring{$U(n)$}{U(n)} and The Sets \texorpdfstring{$\mathcal{J}$ and $\mathcal{K}$}{J and K}}\label{sec:UrelateJK}

\begin{lemma}\label{lem:npar-noemum}
For $n\ge 1$, $U(n)$ is odd if and only if $n\in\mathcal{J}$ and even if and only if $n\in\mathcal{K}$.
\end{lemma}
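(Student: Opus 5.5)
The plan is to chain Theorem~\ref{thm:main2} with the characterization \eqref{eq:HJK} of $H(n)$ obtained in Section~\ref{sec:zerosandones}. Fix $n\ge 1$ and write $n=m^2+j$ via the bijection $f$, so $m=\lfloor\sqrt n\rfloor\ge1$ and $0\le j\le 2m$. Because $\{J_m,K_m\}$ refines the partition $\{I_m\}$, and $\mathcal J$, $\mathcal K$ are disjoint with union $\mathbf N$, exactly one of $n\in\mathcal J$ or $n\in\mathcal K$ holds. Which one holds is determined by the parity of $j$, by the definitions \eqref{eq:J-def} and \eqref{eq:K-def}.

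Next I invoke \eqref{eq:HJK}, which came from the identity $K(x)=H(x)$. It gives $H(n)=1$ when $n\in\mathcal K$ and $H(n)=0$ when $n\in\mathcal J$. Substituting into Theorem~\ref{thm:main2}, $U(n)\equiv 1-H(n)\pmod 2$, yields two cases. If $n\in\mathcal J$, then $U(n)\equiv 1$, so $U(n)$ is odd. If $n\in\mathcal K$, then $U(n)\equiv 0$, so $U(n)$ is even.

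The ``only if'' directions follow by contraposition, using the dichotomy from the first step. For example, if $U(n)$ is odd, then $n\notin\mathcal K$, since membership in $\mathcal K$ forces $U(n)$ even, and hence $n\in\mathcal J$. The even case is symmetric. As a cross-check, the Offset Rule from the Introduction, $U(n)\equiv j+1\pmod 2$, gives the same conclusion directly.

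The only delicate point is why the hypothesis $n\ge1$ is needed. Formula \eqref{eq:sumdiv} and the notion of an $n\times n$ Redheffer matrix only make sense for $n\ge1$, while $0\in J_0\subset\mathcal J$ would otherwise give a meaningless statement at $n=0$. I would also verify that the Offset Rule used in Theorem~\ref{thm:main2} is valid for every $m\ge1$, including the smallest case $n=1$, where $m=1$, $j=0$, and $U(1)=1$ is odd, consistent with $1\in\mathcal J$. Beyond this check, I expect no real obstacle: the lemma is a direct corollary of results already established.
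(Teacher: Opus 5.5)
Your proposal is correct and follows essentially the paper's route: Theorem~\ref{thm:main2} combined with \eqref{eq:HJK} turns membership in $\mathcal J$ or $\mathcal K$ into the parity of $U(n)$. For the reverse implications you argue by contraposition using the dichotomy $\mathcal J\sqcup\mathcal K=\mathbf N$, while the paper uses $H(n)\in\{0,1\}$ from Theorem~\ref{thm:parsum} to upgrade $H(n)\equiv 0\pmod 2$ (resp.\ $H(n)\equiv 1\pmod 2$) to an equality and then reads off membership from \eqref{eq:HJK}; this is the same argument in different dress.
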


\begin{proof}
Suppose $U(n)$ is odd. By Theorem~\ref{thm:main2}, 
$U(n)\equiv 1-H(n)\pmod{2}$, so $1-H(n)\equiv 1\pmod{2}$ and hence $H(n)\equiv 0\pmod{2}$.
By Theorem~\ref{thm:parsum}, $H(n)\in\{0,1\}$, so $H(n)=0$. By \eqref{eq:HJK},
$H(n)=0$ exactly on $\mathcal{J}$; therefore $n\in\mathcal{J}$. Similarly, if $U(n)$ is even, 
then $1-H(n)\equiv 0\pmod{2}$, whence $H(n)\equiv 1\pmod{2}$.
Again $H(n)\in\{0,1\}$, so $H(n)=1$, and by this is equivalent to $n\in\mathcal{K}$. Now, if $n\in \mathcal{J}$ then $H(n)=0$. Following Theorem~\ref{thm:main2}, $U(n)\equiv 1-0\equiv 1 \pmod{2}.$ If 
$n\in \mathcal{K}$ then $H(n)=1$ and $U(n)\equiv 1-1\equiv 0 \pmod{2}.$
\end{proof}

Our immediate task then is to determine the exact form of the elements of $\mathcal{J}$ and $\mathcal{K}$ which will inform us as to which numbers $n$ lead $H(n)$ to evaluate to a zero or a one. 

\section{Enumerating the Elements of \texorpdfstring{$\mathcal{J}$ and $\mathcal{K}$}{J and K}}\label{sec:enumJK}

\begin{theorem}\label{thm:main1}
\itshape Let $j(n)$ and $k(n)$ denote the $(n+1)-$th smallest element of the sets $\mathcal{J},$ and $\mathcal{K}$ respectively. Then the following hold:
\begin{enumerate}
\item[(a)]
$$
j(n) = 2n - \left\lfloor \frac{\sqrt{8n + 1} - 1}{2} \right\rfloor, \quad \text{and} \quad H(j(n))=0.
$$
\item[(b)]
$$
k(n) = 2n + 1 + \left\lceil \frac{\sqrt{8n+9} - 1}{2} \right\rceil, \quad \text{and} \quad H(k(n))=1
$$
\end{enumerate}
\end{theorem}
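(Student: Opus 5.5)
The plan is to enumerate $\mathcal{J}$ and $\mathcal{K}$ block by block, using the refinement $I_m=J_m\cup K_m$ from Section~\ref{sec:zerosandones}. Since every element of $I_m$ is smaller than every element of $I_{m+1}$, the elements of $\mathcal{J}$ in increasing order are those of $J_0$, then $J_1$, then $J_2$, and so on. The same holds for $\mathcal{K}$ with $K_1,K_2,\dots$. By \eqref{eq:J-def}, $|J_m|=m+1$, with elements $m^2+2r$ for $0\le r\le m$. By \eqref{eq:K-def}, $|K_m|=m$, with elements $m^2+2r+1$ for $0\le r\le m-1$. The statements $H(j(n))=0$ and $H(k(n))=1$ then follow immediately from \eqref{eq:HJK}, since $j(n)\in\mathcal{J}$ and $k(n)\in\mathcal{K}$. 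So the real content is the two closed forms.

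For (a), the number of elements of $\mathcal{J}$ lying in blocks before $I_m$ is $\sum_{i=0}^{m-1}(i+1)=m(m+1)/2$. Hence the element of index $n$ (counting from $0$) lies in $J_m$ exactly when
\[
\frac{m(m+1)}{2}\le n<\frac{(m+1)(m+2)}{2},
\]
and it sits at position $r=n-m(m+1)/2$ within that block. Therefore
\[
j(n)=m^2+2r=m^2+2n-m^2-m=2n-m .
\]
It remains to identify $m$ as the largest integer with $m(m+1)/2\le n$. The inequality $m^2+m-2n\le 0$ is equivalent to $m\le(\sqrt{8n+1}-1)/2$, so $m=\lfloor(\sqrt{8n+1}-1)/2\rfloor$, which gives (a).

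For (b), the number of elements of $\mathcal{K}$ before block $m$ (with $m\ge1$) is $\sum_{i=1}^{m-1}i=m(m-1)/2$. So index $n$ lies in $K_m$ exactly when
\[
\frac{m(m-1)}{2}\le n\le\frac{m(m+1)}{2}-1,
\]
at position $r=n-m(m-1)/2$. This gives
\[
k(n)=m^2+2r+1=2n+m+1 .
\]
Here $m$ is the least integer with $m(m+1)/2\ge n+1$. Solving $m^2+m-2(n+1)\ge0$ gives $m\ge(\sqrt{8n+9}-1)/2$, so $m=\lceil(\sqrt{8n+9}-1)/2\rceil$. As a sanity check I would verify $n=0$: the formula gives $m=1$ and $k(0)=2$, and likewise I would check the first few values of both formulas against \eqref{eq:Jset} and \eqref{eq:Kset}.

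The main obstacle I expect is the floor/ceiling inversion of the triangular-number inequalities at the boundary cases, for instance when $8n+1$ or $8n+9$ is a perfect square. I will handle this by arguing with the monotone quadratic $t\mapsto t(t+1)/2$ on the integers, showing that the strict and non-strict inequalities select exactly the floor in (a) and the ceiling in (b).
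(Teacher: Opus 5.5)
Your proposal is correct and takes essentially the same route as the paper. Lemmas~\ref{lem:j(n)} and~\ref{lem:k(n)} also count whole blocks via triangular numbers ($m(m+1)/2$ for $\mathcal{J}$, $m(m-1)/2$ for $\mathcal{K}$), locate the position within $J_m$ or $K_m$ to get $j(n)=2n-m$ and $k(n)=2n+m+1$, and identify $m$ as the floor, respectively ceiling, of the positive root of $m^2+m-2n$, respectively $m^2+m-2(n+1)$; the values $H(j(n))=0$ and $H(k(n))=1$ then follow from membership in $\mathcal{J}$ and $\mathcal{K}$, as you say.
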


\begin{lemma}\label{lem:j(n)}
\itshape
Let $j(n)$ denote the $(n+1)-$th smallest element of the set $\mathcal{J}.$ Then:
\begin{equation}
j(n) = 2n - \left\lfloor \frac{\sqrt{8n + 1} - 1}{2} \right\rfloor.
\end{equation}
\end{lemma}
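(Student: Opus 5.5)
The plan is to count elements of $\mathcal{J}$ block by block, using the refinement of Section~\ref{sec:zerosandones}. By \eqref{eq:J-def}, the block $J_m=\{m^2,m^2+2,\dots,m^2+2m\}$ has exactly $m+1$ elements, and these elements are listed in increasing order. Since every element of $I_m$ is smaller than every element of $I_{m+1}$, the elements of $\mathcal{J}$ come in order as $J_0$, then $J_1$, then $J_2$, and so on. The number of elements of $\mathcal{J}$ lying in $J_0\cup\cdots\cup J_{m-1}$ is therefore
\[
\sum_{i=0}^{m-1}(i+1)=\frac{m(m+1)}{2}=T_m,
\]
the $m$-th triangular number. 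The $(n+1)$-th smallest element, indexed from $n=0$, thus lies in the block $J_m$ with $T_m\le n<T_{m+1}$. It sits in position $r=n-T_m$ inside that block, where $0\le r\le m$. Hence
\[
j(n)=m^2+2r=m^2+2n-m(m+1)=2n-m.
\]

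The remaining step is to express $m$ in terms of $n$. Here $m$ is the largest integer with $m(m+1)/2\le n$, that is, with $m^2+m-2n\le 0$. The positive root of $t^2+t-2n=0$ is $t_0=(\sqrt{8n+1}-1)/2$. Since $t^2+t-2n$ is increasing for $t\ge0$, an integer $m\ge0$ satisfies $m^2+m\le 2n$ exactly when $m\le t_0$. So $m=\lfloor t_0\rfloor$, and substituting into $j(n)=2n-m$ gives the lemma.

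The only point needing care is the boundary case where $8n+1$ is a perfect square, i.e.\ $n=T_m$ exactly. In that case $t_0=m$ is an integer and the floor returns $m$, consistent with $r=0$ and $j(n)=m^2$. I would check $n=0,1,2,3$ against \eqref{eq:Jset} as a sanity test. For example, $n=3$ gives $m=2$ and $j(3)=4$. I do not expect any genuine obstacle here: the inverse-triangular-number step is the main calculation, and the monotonicity argument above settles it.
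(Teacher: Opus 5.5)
Your proposal is correct and follows essentially the paper's argument: blocks $J_m$ of size $m+1$, a triangular-number count $m(m+1)/2$ for the preceding blocks, $j(n)=m^2+2\bigl(n-m(m+1)/2\bigr)=2n-m$, and $m=\bigl\lfloor(\sqrt{8n+1}-1)/2\bigr\rfloor$ as the largest integer with $m(m+1)\le 2n$. The only difference is cosmetic: you index directly into the blocks instead of inverting the counting function $|\mathcal{J}_{\le x}|$, and your monotonicity remark justifies the floor step a little more explicitly than the paper does.
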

\begin{proof}
Each $J_m$ has exactly $(m+1)$ elements. For a real number $x \ge 0$, define ${J}_{\le x}=\{y \in \mathcal{J} : y \le x \}. $ If we set $m=\lfloor\sqrt{x}\rfloor$ then for each $k=0,1,\dots,m-1$, the entire block $J_k = \{\,k^2, k^2+2,\dots,k^2+2k\}$ lies in $[0,x]$. Since $|J_k|=k+1$,
$$
\sum_{k=0}^{m-1} (k+1) = \frac{m(m+1)}{2}.
$$
Now consider the block $J_m$. The elements of $J_m$ that lie below $x$ must satisfy $m^2 + 2r \;\le\; x,$ which implies $r \le (x - m^2)/2$. Since $r \ge 0$ is an integer, the number of such $r$ is 
$$
\left\lfloor \frac{x - m^2}{2} \right\rfloor + 1 \quad.
$$
Putting these counts together, we get
$$
|\mathcal{J}_{\le x}| = \frac{m(m+1)}{2} + \Bigl\lfloor \tfrac{x - m^2}{2}\Bigr\rfloor +1.
$$
Now, $j(n)$ is the unique $x$ for which $|\mathcal{J}_{\le x}| = n + 1.$ From the counting formula,
$$
\frac{m(m+1)}{2} + \left\lfloor \tfrac{x - m^2}{2} \right\rfloor +1 = n+1,
$$
Subtract 1 from both sides:
$$
\left\lfloor \tfrac{x - m^2}{2} \right\rfloor = n -\frac{m(m+1)}{2}.
$$
Call the right-hand side $r$. Since $0 \le r \le m$, we have that, $(x - m^2)/2$ falls in $[r,\,r+1);$ which implies $x = m^2 + 2r.$ But $r = n - m(m+1)/2$. Hence
$$
j(n) = m^2 + 2\Bigl(n - \tfrac{m(m+1)}{2}\Bigr)=2n-m.
$$
The integer $m$ must satisfy
$$
\frac{m(m+1)}{2} \le n \le \frac{m(m+1)}{2} + m.
$$
Equivalently, $m$ is the largest integer so that $m(m+1)/2\le n$. Since $m(m+1) \le 2n < (m+1)(m+2),$ we see that $m$ is the largest integer solving the inequality $m(m+1) \le 2n.$ Consider the quadratic equation $m^2 + m - 2n = 0.$ By the standard quadratic formula, the solutions are
$$
m = \frac{-1 \pm \sqrt{1 - 4t}}{2}.
$$
In our case, $t = -2n$, so $1 - 4t = 1 - 4(-2n) = 1 + 8n.$ Hence the two real solutions to $m^2 + m - 2n = 0$ are
$$
m = \frac{-1 \pm \sqrt{1 + 8n}}{2}.
$$
Discard the negative root and note that $m$ must lie \emph{at or below} the positive solution. Thus,
$$
m \le \frac{-1 + \sqrt{1 + 8n}}{2}.
$$
The largest integer $m$ satisfying $m(m+1)\le 2n$ is therefore
$$
m =\left\lfloor \frac{\sqrt{8n + 1} - 1}{2}\right\rfloor.
$$
Substitution gives 
\begin{equation}\label{eq:j(k)}
j(n)=2n-\left\lfloor\frac{\sqrt{8n+1}-1}{2}\right\rfloor.
\end{equation}
\end{proof}
The values of $j(n)$ are:
$$
0,1,3,4,6,8,9,11,13,15,16,\ldots,.
$$

\begin{lemma}\label{lem:k(n)}
\itshape
Let $k(n)$ denote the $(n+1)-$th smallest element of the set $\mathcal{K}.$ Then
\begin{equation}
k(n) = 2n + 1 + \left\lceil \frac{\sqrt{8n+9} - 1}{2} \right\rceil,
\end{equation}
\end{lemma}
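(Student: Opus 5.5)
The plan mirrors the counting argument of Lemma~\ref{lem:j(n)}, now applied to the odd-offset blocks. By \eqref{eq:K-def}, for $m\ge1$ the block is
\[
K_m=\{m^2+1,\,m^2+3,\,\dots,\,m^2+2m-1\},
\]
which has exactly $m$ elements, and $K_0=\varnothing$. The blocks are ordered, since every element of $K_m$ is less than $(m+1)^2$, which is less than every element of $K_{m+1}$. So the number of elements of $\mathcal{K}$ lying in $K_1\cup\dots\cup K_{m-1}$ is
\[
\sum_{i=1}^{m-1} i=\frac{m(m-1)}{2}.
\]
Hence the element of $\mathcal{K}$ with $0$-based index $n$ lies in the block $K_m$ determined by
\[
\frac{m(m-1)}{2}\le n<\frac{m(m+1)}{2}.
\]
Within that block it is the entry with index $r=n-m(m-1)/2$, where $0\le r\le m-1$. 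This gives
\[
k(n)=m^2+2r+1=m^2+2n-m(m-1)+1=2n+1+m .
\]

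The remaining step is to identify $m$ in closed form, and this is the only point needing care, because it produces a ceiling rather than the floor of Lemma~\ref{lem:j(n)}. I will characterize $m$ as the \emph{smallest} positive integer with $m(m+1)/2>n$. For integers this is equivalent to $m(m+1)\ge 2n+2$, that is, $m^2+m-2(n+1)\ge0$. The quadratic has positive root
\[
\frac{-1+\sqrt{8n+9}}{2},
\]
so $m$ is the smallest integer at or above this root:
\[
m=\left\lceil \frac{\sqrt{8n+9}-1}{2}\right\rceil .
\]
This automatically satisfies the lower inequality $m(m-1)/2\le n$, by minimality. It also satisfies $m\ge1$: for $n\ge0$ the root is at least $1$, so the empty block $K_0$ is correctly skipped. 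Substituting into $k(n)=2n+1+m$ gives the stated formula.

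As a sanity check I will verify small cases against \eqref{eq:Kset}. For $n=0$ the ceiling is $\lceil 1\rceil=1$, giving $k(0)=2$. For $n=1$ it is $\lceil(\sqrt{17}-1)/2\rceil=2$, giving $k(1)=5$. For $n=2$ it is $\lceil(5-1)/2\rceil=2$, giving $k(2)=7$. All three agree with $\mathcal{K}=\{2,5,7,\dots\}$.

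The main obstacle is the boundary case when $8n+9$ is a perfect square, where the root is an integer and a floor-versus-ceiling slip would be easy. I will handle it explicitly: then $n+1=m(m+1)/2$ is triangular and $n$ is the \emph{last} index of $K_m$. This is consistent with the ceiling returning $m$ itself, as in the case $n=2$, $m=2$ above.
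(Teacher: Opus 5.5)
Your proof is correct and follows essentially the same route as the paper: you count the $m(m-1)/2$ elements of $K_1,\dots,K_{m-1}$, locate $n$ within $K_m$ to get $k(n)=2n+m+1$, and identify $m$ as the least integer with $m(m+1)\ge 2(n+1)$, i.e.\ the ceiling of the positive root of $m^2+m-2(n+1)=0$. Your 0-based within-block index and explicit check of the triangular boundary case are cosmetic differences, and if anything make the minimality step a bit cleaner than the paper's version.
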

\begin{proof}
Each block $K_m$ is defined by $K_m=\{\,m^2+1,\; m^2+3,\; \dots,\; m^2+(2m-1)\},$so that $|K_m|=m$. For a real number $x\ge 0$, define $\mathcal{K}_{\le x}=\{\,y\in\mathcal{K}: y\le x\}.$ If we set $m=\lfloor \sqrt{x}\rfloor$, then for each $k=1,2,\dots, m-1$ the entire block $K_k$ lies within $[0,x]$. Since $|K_k|=k$, the contribution of these blocks is
$$
\sum_{k=1}^{m-1} k = \frac{(m-1)m}{2}.
$$
Now consider the block $K_m$. An element of $K_m$ has the form $x=m^2+(2r-1)$ for some integer $r$ with $1\le r\le m$. Such an element satisfies $m^2+(2r-1)\le x$ so
$r\le \frac{x-m^2+1}{2}.$ Thus, the number of elements of $K_m$ that do not exceed $x$ is exactly
$$
\left\lfloor \frac{x-m^2+1}{2} \right\rfloor,
$$
provided that $\left\lfloor \frac{x-m^2+1}{2} \right\rfloor \le m$ (which is automatic when $x\in K_m$). Therefore, the total number of elements of $\mathcal{K}$ that are at most $x$ is
$$
|\mathcal{K}_{\le x}|=\frac{(m-1)m}{2}+\left\lfloor \frac{x-m^2+1}{2} \right\rfloor.
$$
Now, by definition, $k(n)$ is the unique $x\in\mathcal{K}$ for which
$$
|\mathcal{K}_{\le x}| = n+1.
$$
Since $x\in K_m$, we write $x=m^2+(2r-1)$ for some $r$, $1\le r\le m$, so that
$$
\left\lfloor \frac{x-m^2+1}{2} \right\rfloor = r.
$$
Thus, the counting formula becomes
$$
\frac{(m-1)m}{2}+r = n+1.
$$
Solving for $r$, we obtain
$$
r=n+1-\frac{(m-1)m}{2}.
$$
Since $x=m^2+(2r-1)$, it follows that
$$
k(n)=m^2+2\Bigl(n+1-\frac{(m-1)m}{2}\Bigr)-1.
$$
A short calculation shows:
$$
k(n)=m^2+2n+2-(m^2-m)-1=2n+m+1.
$$
The integer $m$ is determined by the requirement that the block $K_m$ contains the $(n+1)$th element. Since the total number of elements in $\mathcal{K}$ up through the entire block $K_m$ is
$$
\frac{(m-1)m}{2}+ m = \frac{m^2+m}{2},
$$
the condition is
$$
\frac{(m-1)m}{2} < n+1\le \frac{m^2+m}{2}.
$$
This inequality is equivalent to
$$
m(m+1)\ge 2(n+1) > m(m-1).
$$
Consider the quadratic equation
$$
m^2+m-2(n+1)=0.
$$
By the standard quadratic formula, the solutions are 
$$
m = \frac{-1 \pm \sqrt{1 - 4t}}{2}.
$$
In this case, $t = -2(n+1)=-2n-1$, so $1 - 4t = 1 - 4(-2(n+1)) = 1 + 8(n+1)=8n+9.$ Its positive solution is
$$
m=\frac{-1+\sqrt{1+8(n+1)}}{2} = \frac{-1+\sqrt{8n+9}}{2}.
$$
Thus, $m$ is the smallest integer greater than or equal to $\frac{-1+\sqrt{8n+9}}{2}$, which can be written as
$$
m=\left\lceil \frac{\sqrt{8n+9}-1}{2} \right\rceil.
$$
Substituting this expression for $m$ into $k(n)=2n+m+1$, we obtain
\begin{equation}\label{eq:k(n)}
k(n)=2n+1+\left\lceil \frac{\sqrt{8n+9}-1}{2} \right\rceil.
\end{equation}
\end{proof}
The first few values are:
$$
2,5,7,10,12,14,17,19,21\ldots.
$$

\subsection{Proof of Theorem ~\ref{thm:main1}}\label{sec:main1}
\begin{proof}
Inside each block $I_m=\{m^2,\dots,(m+1)^2-1\}$ we have $a(m^2)=0$ and 
$a(m^2+j)=(-1)^{j-1}$ for $1\le j\le 2m$, so by Theorem~\ref{thm:parsum}\,(ii) the partial sum
$H(n)$ equals $0$ on even offsets and $1$ on odd offsets. 
By construction $j(n)\in\mathcal{J}$ (even offsets) and $k(n)\in\mathcal{K}$ (odd offsets), hence 
$H(j(n))=0$ and $H(k(n))=1$. 
The explicit formulas for $j(n)$ and $k(n)$ are given by Lemmas~\ref{lem:j(n)} and~\ref{lem:k(n)}.
\end{proof}

\begin{corollary}\label{cor:npar-enum}
For $n\ge 1$, $U\bigl(j(n)\bigr)\equiv 1\pmod{2}$.
For $n\ge 0$, $U\bigl(k(n)\bigr)\equiv 0\pmod{2}$.
\end{corollary}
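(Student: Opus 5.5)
The corollary follows by combining Theorem~\ref{thm:main1} with Theorem~\ref{thm:main2}, or equivalently with Lemma~\ref{lem:npar-noemum}. No new counting is needed, because the value of $H$ along each enumerated sequence is already fixed by Theorem~\ref{thm:main1}. The only work is to apply the congruence $U(n)\equiv 1-H(n)\pmod 2$ at the arguments $j(n)$ and $k(n)$, and to check that those arguments lie in the range where $U$ is meaningful.

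\emph{Step 1: the $\mathcal{J}$ statement.} For $n\ge 1$, Theorem~\ref{thm:main1}(a) gives $H(j(n))=0$. Applying Theorem~\ref{thm:main2} with $j(n)$ in place of $n$ then yields $U(j(n))\equiv 1-0\equiv 1\pmod 2$.

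The restriction $n\ge 1$ is there to exclude $j(0)=0$. At that point the matrix is $0\times 0$, and the formula \eqref{eq:sumdiv} would give $U(0)=-1$, which lies outside the setting of Lemma~\ref{lem:npar-noemum}. For $n\ge 1$ we have $j(n)=2n-m\ge n\ge 1$, since $m\le n$. So the argument is a genuine positive integer, and Lemma~\ref{lem:npar-noemum} also applies directly: $j(n)\in\mathcal{J}$ gives $U(j(n))$ odd.

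\emph{Step 2: the $\mathcal{K}$ statement.} For $n\ge 0$, Theorem~\ref{thm:main1}(b) gives $H(k(n))=1$, so Theorem~\ref{thm:main2} yields $U(k(n))\equiv 1-1\equiv 0\pmod 2$.

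Here no value needs excluding, because $k(n)\ge k(0)=2\ge 1$. This is consistent with $\mathcal{K}$ having smallest element $2$.

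The only point requiring care is Step 1's boundary bookkeeping: confirming that the indices are valid inputs for $U$, and explaining why $n=0$ is excluded for $j$ but not for $k$. Beyond that, the proof is a direct substitution.
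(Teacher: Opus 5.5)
Your proposal is correct and follows essentially the same route as the paper: obtain $H(j(n))=0$ and $H(k(n))=1$, then substitute into Theorem~\ref{thm:main2}. The paper gets these values from \eqref{eq:HJK} via $j(n)\in\mathcal{J}$ and $k(n)\in\mathcal{K}$, whereas you cite Theorem~\ref{thm:main1}, which rests on the same fact; your extra check that $j(n)\ge 1$ and $k(n)\ge 2$ is a harmless addition the paper omits.
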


\begin{proof}
Since $j(n)\in\mathcal J$ for all $n$, \eqref{eq:HJK} gives $H\bigl(j(n)\bigr)=0$.
Then Theorem~\ref{thm:main2} yields
$U\bigl(j(n)\bigr)\equiv 1-H\bigl(j(n)\bigr)\equiv 1\pmod{2}$.
Similarly, $k(n)\in\mathcal K$ for all $n$, so $H\bigl(k(n)\bigr)=1$ and
$U\bigl(k(n)\bigr)\equiv 1-1\equiv 0\pmod{2}$.
\end{proof}

\section{Final Note}\label{sec:finalnote}
We are curious to ask whether the sequence of integers comprising the sets $\mathcal{J}$ and $\mathcal{K}$ are known and previously examined. Answering that question leads directly to a query proposed by Ian G. Connell appearing in the \textit{American Mathematical Monthly} in 1959, \cite{E1382-problem}, concerning a particular integer sequence. A solution to Connell's query was given in \cite{E1382-solution}, the following year. Our own work connects the parity of $U(n)$ to Connell's sequence.

Ian Connell challenged readers to find a closed form for an unusual sequence, $c(n)$. The sequence, which now bears his name, is constructed as follows: begin by writing the first odd number, $1$; then write the next two even numbers after $1$, namely $2,\,4$; then write the next three odd numbers after $4$, namely $5,\,7,\,9$; and so on. The first few terms of the sequence are
$$
1,\, 2,\, 4,\, 5,\, 7,\, 9,\, 10,\, 12,\, 14,\, 16,\, 17,\, \ldots
$$
Andrew Korsak (University of Toronto) provided a solution:
\begin{equation}\label{eq:korsak}
c(n)\;=\;2n-\left\lfloor\frac{1+\sqrt{\,8n-7\,}}{2}\right\rfloor.
\end{equation}
This sequence is listed in the OEIS as \href{https://oeis.org/A001614}{A001614} (\nolinkurl{https://oeis.org/A001614}). 

N.S. Mendelsohn considered a modification of the Connell sequence by replacing the first term with the even number $0$, then writing the next two odd numbers after $0$, namely $1,\,3$, followed by the next three even numbers after $3$, namely $4,\,6,\,8$, and so on. The first few terms of this sequence are
$$ 
0,\, 1,\, 3,\, 4,\, 6,\, 8,\, 9,\, 11,\, 13,\, 15,\, \ldots 
$$
N.S. Mendelsohn provided a solution to his own sequence
\begin{equation}\label{eq:mendelsohn}
m(n)=2n-\left\lfloor \frac{\sqrt{8n+1}-1}{2}\right\rfloor.
\end{equation}
This sequence is listed in the OEIS as \href{https://oeis.org/A133280}{A133280} (\nolinkurl{https://oeis.org/A133280}). 

There is another sequence related to Mendelsohn's sequence namely its complement:
$$
2,\,5,\,7,\,10,\,12,\,14,\,17,\,19,\,21,\,23,\ldots,
$$
which is listed in the OEIS as \href{https://oeis.org/A195437}{A195437} (\nolinkurl{https://oeis.org/A195437}).
In the language developed above, this complementary sequence is exactly our odd--offset class
$\mathcal{K}$, enumerated in closed form by $k(n).$ The Connell's sequence splits as a disjoint union
$$
\mathcal{K}\ \cup\ \{1^2,2^2,3^2,\ldots\}.
$$
Equivalently, deleting the perfect squares from the Connell sequence (A001614) produces precisely the complementary Mendelsohn sequence (A195437), i.e.\ the sequence $k(n)$.

We derive closed forms for the sequences of Connell and Mendelsohn using our own work; which in turn connects the parity of $U(n)$ to the aforementioned sequences.

\begin{theorem}\label{thm:connell_mendelsohn}
The following holds in $\mathbf{C}[[x]]$:
\begin{enumerate}[label=(\roman*)]
\item \textbf{Mendelsohn.} One has
$$
\sum_{n=0}^{\infty}x^{m(n)} \;=\; J(x).
$$
In particular $m(n)=j(n)$ for all $n\ge0$, and hence
$$
m(n)=2n-\left\lfloor \frac{\sqrt{8n+1}-1}{2}\right\rfloor,
$$
which is equation ~\ref{eq:mendelsohn}.
\item \textbf{Connell.} For $n\ge 1$, 
$$
c(n)=m(n-1)+1=j(n-1)+1,
$$ 
and consequently
$$
c(n)=2n-\left\lfloor\frac{1+\sqrt{8n-7}}{2}\right\rfloor,
$$
which is equation ~\ref{eq:korsak}.
\end{enumerate}
\end{theorem}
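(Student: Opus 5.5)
For part (i), I will show that the set of values of Mendelsohn's sequence coincides with $\mathcal{J}$, listed in increasing order. By construction the sequence runs in runs: run number $m$ (for $m=0,1,2,\dots$) has $m+1$ terms of a fixed parity, and consecutive terms within a run differ by $2$. Run $m$ opens with the first integer of the opposite parity after the last term of the previous run, so it begins one step after that term. I will prove by induction on $m$ that run $m$ is exactly $J_m=\{m^2,m^2+2,\dots,m^2+2m\}$. The base case is run $0$, which equals $\{0\}=J_0$. For the inductive step, assume run $m$ ends at $m^2+2m$. The next run then starts at $m^2+2m+1=(m+1)^2$ and consists of the $m+2$ numbers $(m+1)^2+2r$ for $0\le r\le m+1$, which is $J_{m+1}$.

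The parity alternation is consistent with this description. The parity of $m^2$ matches the parity of $m$, and it does flip between $m$ and $m+1$. Since the runs are increasing and disjoint, and they concatenate the blocks $J_0,J_1,\dots$ in order, the sequence $m(n)$ is the increasing enumeration of $\mathcal{J}$. Hence $\sum_{n\ge0}x^{m(n)}=\sum_m\sum_{n\in J_m}x^n=J(x)$, and comparing the $(n+1)$-th exponents gives $m(n)=j(n)$. The closed form then follows from Lemma~\ref{lem:j(n)}.

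For part (ii), I will argue in the same way. Connell's run $m$ (for $m\ge1$) has $m$ terms, odd when $m$ is odd and even when $m$ is even, and it starts right after the previous run. By induction, run $m$ is $\{(m-1)^2+1,(m-1)^2+3,\dots,(m-1)^2+2m-1\}=J_{m-1}+1$. The base case is run $1=\{1\}=J_0+1$. For the inductive step, run $m$ ends at $m^2$, so the next run starts at $m^2+1$. Therefore the increasing enumeration of Connell's sequence is the increasing enumeration of $\mathcal{J}$ shifted by $1$. Taking into account the indexing, with $c$ starting at $n=1$ and $j$ at $n=0$, this gives $c(n)=j(n-1)+1=m(n-1)+1$.

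The remaining step, and the only real obstacle, is the identity of floors
$$2(n-1)+1-\Bigl\lfloor\tfrac{\sqrt{8n-7}-1}{2}\Bigr\rfloor=2n-\Bigl\lfloor\tfrac{1+\sqrt{8n-7}}{2}\Bigr\rfloor .$$
Here I substituted $8(n-1)+1=8n-7$ into Lemma~\ref{lem:j(n)}. To verify it, I use $\lfloor y+1\rfloor=\lfloor y\rfloor+1$ with $y=(\sqrt{8n-7}-1)/2$. Then the right-hand side is $2n-1-\lfloor y\rfloor$, which equals the left-hand side, so no delicate square-root boundary analysis is needed.
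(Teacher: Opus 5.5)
Your proof is correct. For part (i) it is the paper's argument made explicit. The paper only asserts, by pointing to the listed blocks \eqref{eq:J-def}--\eqref{eq:Jset}, that Mendelsohn's procedure produces $\mathcal{J}$ in increasing order. Your induction actually proves it: run $m$ ends at $m^2+2m$, so run $m+1$ starts at $(m+1)^2$ with the correct parity.

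For part (ii) your route is genuinely different. The paper expands $xJ(x)=\sum_{\ell\ge0}\sum_{r=0}^{\ell}x^{\ell^2+2r+1}$ and splits off the $r=\ell$ terms to get $xJ(x)=K(x)+\sum_{\ell\ge1}x^{\ell^2}$. It then notes that the exponents of $xJ(x)$ are $1+m(0),1+m(1),\dots$ and concludes $c(n)=m(n-1)+1$. At no point does it check this against Connell's run construction, so the identification of Connell's terms with $\mathcal{J}+1$ is left implicit. Your induction, showing that Connell's run $m$ is exactly $J_{m-1}+1$ and ends at $m^2$, supplies that link directly and needs no generating functions.

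What the paper's decomposition gives in return is the structural statement $\mathcal{J}+1=\mathcal{K}\sqcup\{\ell^2:\ell\ge1\}$. This is what ties Connell's sequence to the parity classes in the Final Note and in Corollary~\ref{cor:oeis_parity}(iii). You can recover it from your description in one line, since $J_{m-1}+1=K_{m-1}\cup\{m^2\}$. The closing floor identity via $\lfloor y+1\rfloor=\lfloor y\rfloor+1$ is the same in both.
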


\begin{proof}
For \textit{(i)} we have by Section~\ref{sec:zerosandones} that
$$
J(x)=\sum_{n\in\mathcal{J}}x^n.
$$
The terms of Mendelsohn's procedure are precisely the integers in $\mathcal{J}$ listed in increasing order (see \eqref{eq:J-def}--\eqref{eq:Jset}), hence
$$
\sum_{n=0}^{\infty}x^{m(n)}=J(x).
$$
Since $j(n)$ denotes the $(n+1)$-th smallest element of $\mathcal{J}$, we also have
$J(x)=\sum_{n=0}^{\infty}x^{j(n)}$, and therefore $m(n)=j(n)$ for all $n\ge0$.
Substituting the closed form for $j(\cdot)$ from Theorem~\ref{thm:main1}(a) yields ~\ref{eq:mendelsohn}. 

For the proof of \textit{(ii)} we begin by using the explicit form of $J(x)$ from Section~\ref{sec:zerosandones},
$$
xJ(x)=\sum_{\ell=0}^{\infty}\sum_{r=0}^{\ell}x^{\ell^2+(2r+1)}.
$$
Split off the last term $r=\ell$ in the inner sum:
$$
xJ(x)=\sum_{\ell=0}^{\infty}\sum_{r=0}^{\ell-1}x^{\ell^2+(2r+1)}\;+\;\sum_{\ell=0}^{\infty}x^{\ell^2+(2\ell+1)}.
$$
The first double sum is exactly the odd--offset series $K(x)$ (see the block description in \eqref{eq:K-def} and the union $\mathcal{K}=\bigcup_{\ell\ge1}K_\ell$),
and the second sum simplifies to
$$
\sum_{\ell=0}^{\infty}x^{(\ell+1)^2}=\sum_{\ell=1}^{\infty}x^{\ell^2}=F(x;1,0)-1.
$$
By part (i) we have
$$
J(x)=\sum_{n=0}^{\infty}x^{m(n)}=\sum_{n=0}^{\infty}x^{j(n)}.
$$
Multiplying by $x$ gives
$$
xJ(x)=\sum_{n=0}^{\infty}x^{m(n)+1}=\sum_{n=0}^{\infty}x^{j(n)+1}.
$$
Since $m(n)$ (equivalently $j(n)$) is strictly increasing, the exponents occurring in $xJ(x)$, read in increasing order, are exactly
$$
1+m(0),\ 1+m(1),\ 1+m(2),\ \dots,
$$
hence $c(n)=m(n-1)+1=j(n-1)+1$ for $n\ge1$.. Finally, substitute the closed form for $j(\cdot)$ from Theorem~\ref{thm:main1}(a):
$$
c(n)=1+2(n-1)-\left\lfloor\frac{\sqrt{8(n-1)+1}-1}{2}\right\rfloor
=2n-1-\left\lfloor\frac{\sqrt{8n-7}-1}{2}\right\rfloor.
$$
Using $\lfloor y\rfloor+1=\lfloor y+1\rfloor$, this rewrites as
$$
c(n)=2n-\left\lfloor\frac{1+\sqrt{8n-7}}{2}\right\rfloor,
$$
which is ~\ref{eq:korsak}.
\end{proof}

\Needspace{10\baselineskip}
\begin{corollary}[OEIS interpretation of the parity classes]\label{cor:oeis_parity}
For $n\ge 1$ the following hold:
\begin{enumerate}[label=(\roman*)]
\item If $n$ belongs to the sequence A133280 then $U(n)\equiv 1 \pmod{2}.$
\item If $n$ belongs to the sequence A195437 then $U(n)\equiv 0 \pmod{2}.$
\item If $n$ belongs to the sequence A001614 then $U(n)\equiv 1 \pmod{2}$ if and only if $n$ is a perfect square, otherwise $U(n)\equiv 0\pmod{2}$.
\end{enumerate}
\end{corollary}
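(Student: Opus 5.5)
The plan is to reduce each part to earlier set identifications and then apply Lemma~\ref{lem:npar-noemum}, which says that for $n\ge1$, $U(n)$ is odd exactly on $\mathcal{J}$ and even exactly on $\mathcal{K}$.

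For (i), Theorem~\ref{thm:connell_mendelsohn}(i) gives $m(k)=j(k)$ for all $k$. Hence the terms of A133280 are precisely the elements of $\mathcal{J}$. So if $n\ge1$ lies in A133280, then $n\in\mathcal{J}$, and Lemma~\ref{lem:npar-noemum} (equivalently Corollary~\ref{cor:npar-enum}) gives $U(n)\equiv1\pmod 2$.

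For (ii), A195437 is the complement of Mendelsohn's sequence in $\mathbf{N}$. Since $\mathcal{J}\cup\mathcal{K}=\mathbf{N}$ is a disjoint union, this complement is exactly $\mathcal{K}$, enumerated by $k(n)$ in Lemma~\ref{lem:k(n)}. Lemma~\ref{lem:npar-noemum} then gives $U(n)\equiv0\pmod 2$.

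For (iii), I would first establish the decomposition
\[
\{c(n):n\ge1\}=\mathcal{K}\sqcup\{\ell^2:\ell\ge1\}.
\]
The proof of Theorem~\ref{thm:connell_mendelsohn}(ii) already gives $xJ(x)=K(x)+\sum_{\ell\ge1}x^{\ell^2}$. It also shows $xJ(x)=\sum_{n\ge1}x^{c(n)}$. Both summands on the right have coefficients in $\{0,1\}$, and their sum $xJ(x)$ also has $0/1$ coefficients. Therefore their supports are disjoint, and together they give exactly the Connell terms.

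Now take $n$ in A001614. If $n$ is a perfect square, $n=\ell^2$ with $\ell\ge1$, then its offset is $j=0$, which is even, so $n\in\mathcal{J}$ and $U(n)$ is odd. Otherwise $n\in\mathcal{K}$ and $U(n)$ is even. Since $\mathcal{J}\cap\mathcal{K}=\varnothing$, this gives the stated ``if and only if.''

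The only step needing care is this disjointness and coverage argument in (iii). I expect it to follow directly from the $0/1$ coefficient comparison above. As a cross-check, one can verify directly that the Connell terms lying in the block $I_\ell$ are $\ell^2+1,\ell^2+3,\dots,\ell^2+2\ell-1$ together with $(\ell+1)^2$.
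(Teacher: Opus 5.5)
Your proposal is correct and follows the paper's proof: (i) and (ii) are identical to the paper's, and in (iii) your identity $xJ(x)=K(x)+\sum_{\ell\ge1}x^{\ell^2}$, together with the $0/1$-coefficient disjointness argument, is the generating-function form of the paper's elementwise split of $c(t)=j(t-1)+1=m^2+2r+1$ into $r\le m-1$ (odd offset, so in $\mathcal{K}$) and $r=m$ (the square $(m+1)^2\in\mathcal{J}$). One small slip in your closing cross-check: $(\ell+1)^2$ lies in $I_{\ell+1}$, not $I_\ell$, so what you mean is that the shifted block $J_\ell+1$ equals $\{\ell^2+1,\ell^2+3,\dots,\ell^2+2\ell-1,(\ell+1)^2\}$.
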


\begin{proof}
(i) By Theorem~\ref{thm:connell_mendelsohn}(i), Mendelsohn's sequence enumerates $\mathcal{J}$ in increasing order (indeed $m(n)=j(n)$), hence $n\in\text{A133280}$ implies $n\in\mathcal{J}$.
By Lemma~\ref{lem:npar-noemum}, for $n\ge 1$ one has $n\in\mathcal{J}$ if and only if $U(n)$ is odd, i.e.\ $U(n)\equiv 1\pmod{2}$. (ii) By construction $\mathcal{J}$ and $\mathcal{K}$ partition $\mathbf{N}$, so $\mathcal{K}=\mathbf{N}\setminus\mathcal{J}$.
The sequence A195437 is defined as the complement of A133280, hence $n\in\text{A195437}$ implies $n\in\mathcal{K}$. Applying Lemma~\ref{lem:npar-noemum} again yields $U(n)$ even, i.e.\ $U(n)\equiv 0\pmod{2}$. (iii) By Theorem~\ref{thm:connell_mendelsohn}(ii), every Connell number has the form $ n=c(t)=j(t-1)+1$ for $t\ge 1,$ with $j(t-1)\in\mathcal{J}$. Write
$j(t-1)=m^2+2r$ with $0\le r\le m.$ Then $ n=j(t-1)+1=m^2+(2r+1).$ If $0\le r\le m-1$, this has odd offset in the $m$-block, so $n\in\mathcal{K}$ and Lemma~\ref{lem:npar-noemum} gives $U(n)\equiv 0\pmod{2}$. If $r=m$, then $n=m^2+(2m+1)=(m+1)^2$ is a perfect square, hence $n\in\mathcal{J}$ (offset $0$ in the $(m+1)$-block) and Lemma~\ref{lem:npar-noemum} gives $U(n)\equiv 1\pmod{2}$.
\end{proof}

\begin{remark}
Solutions to Connell's query were also submitted by Leonard Carlitz, George Glauberman, and Donald E.\ Knuth.
\end{remark}


\end{document}